\documentclass{amsart}
\usepackage{amsmath}
\usepackage{amssymb}
\usepackage{hyperref}
\usepackage{esint}
\usepackage{color}
\usepackage{mathrsfs}

\newcommand{\Om}{\Omega}
\newcommand{\lb}{\lambda}
\newcommand{\ve}{\varepsilon}
\newcommand{\sg}{\sigma}

\newcommand{\fun}[6]{\ensuremath{\Phi^{#1,#2}_{#3,#4}(#6,#5)}}

\makeatletter
\providecommand*{\dif}%
   {\@ifnextchar^{\DIfF}{\DIfF^{}}}
\def\DIfF^#1{%
   \mathop{\mathrm{\mathstrut d}}%
      \nolimits^{#1}\gobblespace
}
\def\gobblespace{%
   \futurelet\diffarg\opspace}
\def\opspace{%
   \let\DiffSpace\!%
   \ifx\diffarg(%
      \let\DiffSpace\relax
   \else
      \ifx\diffarg\[%
         \let\DiffSpace\relax
      \else
         \ifx\diffarg\{%
            \let\DiffSpace\relax
         \fi\fi\fi\DiffSpace}

\newcommand{\medxr}{\mu                                               B(x,r)}
\newcommand{\triplet}{\ensuremath{(X,d,\mu)} }

\theoremstyle{plain}
\newtheorem{theorem}{Theorem}[section]
\newtheorem*{theorema}{Theorem A}
\newtheorem{lemma}[theorem]{Lemma}
\newtheorem{proposition}[theorem]{Proposition}
\newtheorem{corollary}[theorem]{Corollary}

\theoremstyle{definition}
\newtheorem{definition}[theorem]{Definition}

\theoremstyle{remark}
\newtheorem{remark}[theorem]{Remark}

\begin{document}

\title[Riesz type potential operators in generalized grand Morrey spaces]{Riesz type potential operators in  \\generalized grand Morrey spaces}

\author[V. Kokilashvili]{Vakhtang Kokilashvili}
\address{Department of Mathematical Analysis, A. Razmadze Mathematical Institute, I. Java\-khishvili Tbilisi State University,
2. University Str., 0186 Tbilisi, Georgia}
\address{International Black Sea University, 3 Agmashenebeli Ave., Tbilisi 0131, Georgia } 
\email{kokil@rmi.ge}

\author[A. Meskhi]{Alexander Meskhi}
\address{Department of Mathematical Analysis, A. Razmadze Mathematical Institute, 2.  University Str., 0186 Tbilisi, Georgia}
\address{Department of Mathematics,  Faculty of Informatics and Control
Systems,  Georgian Technical University, 77, Kostava St., Tbilisi, Georgia}
\address{Abdus Salam School of Mathematical Sciences, GC
University, 68-B New Muslim Town, Lahore, Pakistan}
\email{meskhi@rmi.ge}

\author[H. Rafeiro]{Humberto Rafeiro}
\address{Instituto Superior T\'ecnico\\ Departamento de Matem\'atica \\Centro CEAF, Av. Rovisco Pais \\1049--001 Lisboa\\Portugal}  
\address{Pontificia Universidad Javeriana\\ Departamento de Matem\'aticas \\Cra 7a No 43-82 Ed. Carlos Ortiz 604\\ Bogot\'a, \textsc{Colombia} }
\email{hrafeiro@math.ist.utl.pt; silva-h@javeriana.edu.co}

\subjclass{Primary  46E30; Secondary 42B20, 42B25}
\keywords{Morrey spaces, maximal operator, Hardy-Littlewood maximal operator, Calder\'on-Zygmund operator}

\begin{abstract}
In this paper we introduce generalized grand Morrey spaces in the framework of quasimetric measure spaces, in the spirit of the so-called grand Lebesgue spaces. We prove a kind of \textit{reduction  lemma} which is applicable to  a variety of operators to reduce their boundedness in generalized grand Morrey spaces to the corresponding  boundedness in  Morrey spaces,  as a result of this application, we obtain the boundedness of the Hardy-Littlewood maximal operator  as well as the boundedness of Calder\'on-Zygmund potential type operators. Boundedness of Riesz type potential operators are also obtained in the framework of homogeneous and also in the nonhomogeneous case in generalized grand Morrey spaces.
\end{abstract}

\maketitle

\tableofcontents

\section{Introduction}

In 1992 T. Iwaniec and C. Sbordone \cite{iwa_sbor1992},  in their studies related with the integrability properties of the Jacobian in a bounded open set $\Omega,$ introduced a new type of function spaces  $L^{p)}(\Omega),$   called \textit{grand Lebesgue spaces}. A generalized version of them, $L^{p),\theta}(\Omega)$ appeared in  L. Greco, T. Iwaniec and C. Sbordone \cite{greco}.
 Harmonic analysis related to these spaces and  their associate spaces (called \textit{small Lebesgue spaces}), was intensively studied during last years due to various applications, we mention e.g. \cite{capone_fio, fratta_fio, fio200, fio_gupt_jain, fio_kara, fio_rako, kok_2010}.
Recently in \cite{samko_umar} there was  introduced a version of weighted grand Lebesgue spaces adjusted for sets $\Om\subseteq \mathbb{R}^n$  of infinite measure, where  the integrability of
$|f(x)|^{p-\ve}$ at infinity was controlled by means of a weight, and there generalized grand Lebesgue spaces were also considered, together with the study of classical operators of harmonic analysis in such spaces.  Another idea of introducing ``bilateral" grand Lebesgue spaces on sets of infinite measure was suggested in \cite{357ad}, where the structure of such spaces was investigated, not operators; the spaces in \cite{357ad} are two parametrical with respect to the exponent $p$, with the norm involving $\sup_{p_1<p<p_2}.$ \\

Morrey spaces $L^{p,\lambda}$ were introduced in  1938 by C. Morrey \cite{405a}  in relation to regularity problems of solutions to partial differential equations, and provided a  useful tool in the regularity theory of PDE's (for  Morrey spaces we refer to books \cite{187a, kuf}, see also  \cite{rafsamsam} where an overview of various generalizations may be found).
 
 Recently, in the spirit of grand Lebesgue spaces, A. Meskhi \cite{meskhi2009, meskhi} introduced \textit{grand Morrey spaces}  (in \cite{meskhi2009} it was already defined on quasi-metric measure spaces with doubling measure) and obtained results on the boundedness   of the maximal operator, Calder\'on-Zygmund singular operators and Riesz potentials. The boundedness of commutators of singular and potential operators in grand Morrey spaces was already treated by X. Ye \cite{ye_xf}.  Note that  the ``\textit{grandification} procedure" was applied only to the parameter $p.$ \\

In this paper we make a further step  and apply the ``grandification procedure" to both the parameters, $p$ and $\lambda,$ obtaining \textit{generalized grand Morrey spaces} $L^{p),\lambda)}_{\theta,A}(X,\mu).$ In this new framework we obtain a reduction boundedness theorem, which reduces the boundedness of operators (not necessarily linear ones) in generalized grand Morrey spaces to the corresponding boundedness in classical Morrey spaces. \\

In our future investigations we plan to establish the boundedness of commutators of singular and fractional integrals and its applications, for example, in regularity problems for  the solution of elliptic equations in non-divergence form from generalized Morrey spaces viewpoint. \\

\noindent \textbf{Notation:}

\noindent    $d_X$    denotes    the    diameter    of    the    $X$   set;\\
\noindent     $A     \sim     B$     for     positive     $A$     and     $B$
means  that  there  exists  $c>0$  such  that  $c^{-1}A  \leqslant B \leqslant c A$;\\
\noindent$B(x,r)=\{y\in             X:             d(x,y)<r            \}$;\\
\noindent by $c$ and $C$ we denote various absolute positive constants,
which may have different values even in the same line;\\
$\hookrightarrow$ means continuous imbedding;\\
\noindent $\fint_B f \dif \mu$ denotes the integral average of $f$, i.e. $\fint_B f \dif \mu:= \frac{1}{\mu B} \int_B f \dif \mu$;\\
\noindent $p^\prime$ stands for the conjugate exponent $1/p+1/p^\prime=1$.

\section{Preliminaries}

\subsection{Spaces of homogeneous type}\label{preliminaries}
Let $X:=\triplet$ be a topological space with a complete measure $\mu$ such that the space of compactly supported continuous functions is dense in $L^1(X,\mu)$ and $d$ is a quasimetric, i.e.  it is  a non-negative real-valued function $d$ on $X\times X$ which  satisfies the conditions:
\begin{enumerate}
\item[(i)] $d(x,y)=0$ if and only if $x=y$;
\item[(ii)] there exists a constant $C_t>0$ such that $d(x,y)\leqslant  C_t  [d(x,z)+d(z,y)]$, for all $x,y,z \in X$, and
\item[(iii)] there exists a constant $C_s>0$ such that $d(x,y)\leqslant  C_s \cdot d(y,x)$, for all $x,y \in X$.
\end{enumerate}
Let  $\mu$  be  a  positive measure on the $\sigma$-algebra of subsets of $X$ which  contains  the  $d$-balls $B(x,r).$
Everywhere in the sequel we assume that all the balls have a finite measure, that is, $\mu B(x,r)<\infty$ for all
$x\in X$ and $r>0$ and that for every neighborhood $V$ of $x\in X$, there exists $r>0$ such that $B(x,r)\subset V$.

We  say  that the measure $\mu$ is \textit{lower  $\alpha$-Ahlfors regular},
if
\begin{equation}\label{lowerahlforscondition}
 \medxr\geq                                                         cr^\alpha
\end{equation}
and \textit{upper  $\beta$-Ahlfors regular} (or, it satisfies the  \textit{growth condition of degree $\beta$}), if
\begin{equation}\label{upperahlforscondition}
 \mu B(x,r)\le cr^\beta,
\end{equation}
where  $\alpha,\beta, c >0$ does not depend on $x$ and $r$. 
 When $\alpha=\beta$,
the measure  $\mu$ is simply called \textit{$\alpha$-Ahlfors regular}.\\

The  condition
\begin{equation}\label{doublingcondition}
 \mu    B(x,2r)\leqslant   C_d \cdot \medxr,   \quad   C_d   >   1
\end{equation}
on the measure $\mu$ with $C_d $ not depending on $x\in X$ and
$0<r<d_X$, is known as the \textit{doubling condition}. %\textcolor{red}{$C_d$ is used to denote the constant of doubling condition, where $d$ stands as obvious for \textit{doubling}}

Iterating     it,      we      obtain
\begin{equation}\label{doublingcondition2}
 \frac{\mu B(x,R)}{\mu B(y,r)} \leqslant C_d \left( \frac{R}{r}\right)^{\log_2 C_d}, \quad  0< r\leqslant R
\end{equation}
for  all  $d$-balls  $B(x,R)$ and $B(y,r)$ with $B(y,r)\subset B(x,R)$.

The triplet $(X,d,\mu)$, with  $\mu$ satisfying the doubling condition, is
said a  \textit{space  of  homogeneous  type}, abbreviated from now on simply as SHT. For some important examples of an SHT we refer e.g. to \cite{coifmanweiss}.

From  \eqref{doublingcondition2}   it   follows   that every
 homogeneous  type space \triplet with
 a  finite  measure is lower  $(\log_2 C_d)$-Ahlfors regular.

\subsection{Grand Lebesgue spaces} For $1<p<\infty$, $\theta >0$ and $0<\ve<p-1$ the \emph{grand Lebesgue space} is the set of measurable functions for which
\begin{equation}\label{added}
 \|f\|_{L^{p),\theta}(X,\mu)}:=\sup_{0<\varepsilon<p-1} \varepsilon^\frac{\theta}{p-\varepsilon} \|f\|_{L^{p-\varepsilon}(X,\mu)}<\infty
 \end{equation}
where $\|f\|^p_{L^p(X,\mu)}:=\int_X |f(y)|^p \dif \mu(y)$. 
In the case $\theta=1,$ we denote $L^{p),\theta}(X,\mu):=L^{p)}(X,\mu).$

When $\mu X<\infty$, then for all $0<\ve\leqslant p-1$ we have
\[
L^p(X,\mu)\hookrightarrow L^{p)}(X,\mu) \hookrightarrow L^{p-\ve}(X,\mu).
\]

For more properties of grand Lebesgue spaces, see \cite{kok_2010}.

\subsection{Morrey spaces}
For $1\leqslant p< \infty $ and $0\leqslant \lambda <1$, the usual Morrey space $L^{p,\lambda}(X,\mu)$ is introduced as the set of all measurable functions such that
\begin{equation}\label{eq:morrey_norm}
\|f\|_{L^{p,\lambda}(X,\mu)}:=\sup_{\stackrel{x \in X}{0<r< d_X }}\left(\frac{1}{\medxr ^{\lambda}}   \int_{B(x,r)} |f(y)|^{p} \dif \mu(y)\right)^\frac{1}{p}<\infty.
\end{equation}

Sometimes we will need a modification of the previous Morrey space, namely,  we define $\mathscr L^{p,\lambda}(X,\mu)$  as 
\begin{equation}\label{eq:morrey_new_norm}
\|f\|_{\mathscr L^{p,\lambda}(X,\mu)}:=\sup_{\stackrel{x\in X}{0<r<d_X}} \left (  \frac{1}{r^{\gamma \lambda}}  \int_{B(x,r)} |f(y)|^p \dif \mu(y) \right)^\frac{1}{p}<\infty.
\end{equation}

\section{Generalized grand Morrey spaces and the reduction lemma}
In this section we will assume that the measure $\mu$ is upper $\gamma$-Ahlfors regular.
After introducing generalized grand Morrey spaces in the framework of SHT in a slightly more general way as was done for the Euclidean case in H. Rafeiro \cite{rafeiro2012}, we show that the same reduction lemma is valid in the setting of SHT.

We introduce the following functional
 \begin{equation}\label{added}
 \Phi^{p,\lb}_{\varphi,A}(f,s):=\sup_{0<\varepsilon<s} \varphi(\varepsilon)^\frac{1}{p-\varepsilon} \|f\|_{L^{p-\varepsilon,\lambda -A(\varepsilon)}(X,\mu)}.
 \end{equation}

\begin{definition}[Generalized grand Morrey spaces]\label{def:ggms}
Let  $1<p<\infty$, $0\leqslant \lambda <1$, $\varphi$ be a positive bounded function with $\lim_{t \to 0+} \varphi(t)=0$ and $A$ be a non-decreasing real-valued non-negative function with $\lim_{x\to 0+} A(x)=0$. By
 $L^{p),\lb)}_{\varphi,A}(X,\mu)$ we denote the space of measurable functions having  the finite norm
\begin{equation}\label{equ:norm}
	\|f\|_{L^{p),\lb)}_{\varphi,A}(X)}:=
\Phi^{p,\lb}_{\varphi,A}(f,s_{\max}), \quad s_{\max}= \min\left\{p-1,a\right\}
\end{equation}
where $a=\sup \{x>0: A(x)\leqslant \lambda\}$.
\end{definition}

\begin{remark}\label{rem:quotient_morrey}
For appropriate $\varphi$, in the case $A\equiv 0, \lambda>0$ we recover  the Grand Morrey spaces introduced in  A. Meskhi \cite{meskhi}, and when  $\lambda=0$, $A\equiv 0$ we have the   grand Lebesgue spaces introduced in \cite{greco} (and in \cite{iwa_sbor1992} in the case $\theta=1$).
\end{remark}

For fixed $p, \lambda, \varphi, A, f$ we have that $ s \mapsto \Phi^{p,\lb}_{\varphi,A}(f,s)$ is a non-decreasing function, but  it is possible to  estimate $\Phi^{p,\lb}_{\varphi,A}(f,s)$ via $\Phi^{p,\lb}_{\varphi,A}(f,\sg)$ with $\sigma<s$ as follows.

\begin{lemma}\label{lem:dominance}  For $0<\sigma<s<s_{\max}$ we have that
\begin{equation}\label{equ:dominance}
	\Phi^{p,\lb}_{\varphi,A}(f,s) \leqslant C \varphi(\sigma)^{-\frac{1}{p-\sigma}} \Phi^{p,\lb}_{\varphi,A}(f,\sg),
\end{equation}
where $C$ depends on   $\gamma,$ the parameters $p,\lb, \varphi, A$  and the diameter $d_X,$ but does not depend on $f, s$ and $\sg$.
\end{lemma}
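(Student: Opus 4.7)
The plan is to estimate $\Phi^{p,\lb}_{\varphi,A}(f,s)$ by splitting the supremum in its definition according to the ranges $\varepsilon\in(0,\sigma)$ and $\varepsilon\in[\sigma,s)$. For $\varepsilon\in(0,\sigma)$ one trivially has $\varphi(\varepsilon)^{1/(p-\varepsilon)}\|f\|_{L^{p-\varepsilon,\lambda-A(\varepsilon)}(X,\mu)}\leqslant \Phi^{p,\lb}_{\varphi,A}(f,\sg)$, and since $\varphi$ is bounded and $p-\sigma\geqslant p-s_{\max}\geqslant 1$, the inequality $\varphi(\sigma)^{1/(p-\sigma)}\leqslant C$ shows that this piece is controlled by $C\varphi(\sigma)^{-1/(p-\sigma)}\Phi^{p,\lb}_{\varphi,A}(f,\sg)$, as desired.

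The heart of the proof is the range $\varepsilon\in[\sigma,s)$, which I would handle by a H\"older-type comparison of Morrey norms. Applying H\"older's inequality on each ball $B(x,r)$ with conjugate exponent $(p-\sigma)/(p-\varepsilon)\geqslant 1$ produces
\[
\int_{B(x,r)}|f|^{p-\varepsilon}\,d\mu\leqslant \left(\int_{B(x,r)}|f|^{p-\sigma}\,d\mu\right)^{\frac{p-\varepsilon}{p-\sigma}}\bigl(\mu B(x,r)\bigr)^{1-\frac{p-\varepsilon}{p-\sigma}}.
\]
Taking the $1/(p-\varepsilon)$-th root and regrouping the powers of $\mu B(x,r)$ so that the Morrey weights $(\mu B(x,r))^{\lambda-A(\varepsilon)}$ and $(\mu B(x,r))^{\lambda-A(\sigma)}$ appear in their natural positions on each side, the residual power of $\mu B(x,r)$ is $E:=\frac{1-\lambda+A(\varepsilon)}{p-\varepsilon}-\frac{1-\lambda+A(\sigma)}{p-\sigma}$. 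A direct check shows $E\geqslant 0$ (since $A$ is non-decreasing, $1-\lambda+A(\sigma)>0$, and $t\mapsto 1/(p-t)$ is increasing) and $E$ is bounded above in terms of $p,\lambda,A$ and $s_{\max}$. The upper $\gamma$-Ahlfors regularity $\mu B(x,r)\leqslant cd_X^\gamma$ then yields $(\mu B(x,r))^E\leqslant C$ uniformly, whence, after taking the supremum over balls, $\|f\|_{L^{p-\varepsilon,\lambda-A(\varepsilon)}(X,\mu)}\leqslant C\|f\|_{L^{p-\sigma,\lambda-A(\sigma)}(X,\mu)}$.

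Multiplying this inequality by $\varphi(\varepsilon)^{1/(p-\varepsilon)}\leqslant C$ and then multiplying and dividing the right-hand side by $\varphi(\sigma)^{1/(p-\sigma)}$ produces the required factor $\varphi(\sigma)^{-1/(p-\sigma)}$ and reduces the problem to estimating $\varphi(\sigma)^{1/(p-\sigma)}\|f\|_{L^{p-\sigma,\lambda-A(\sigma)}(X,\mu)}$ by $\Phi^{p,\lb}_{\varphi,A}(f,\sg)$. This last identification is the main technical obstacle, since the supremum in $\Phi^{p,\lb}_{\varphi,A}(f,\sg)$ runs strictly over the open interval $(0,\sigma)$ and does not literally include the endpoint $\varepsilon=\sigma$. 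I would resolve it by a left-continuity / monotone-convergence argument: for every $\varepsilon'\in(0,\sigma)$ one has $\varphi(\varepsilon')^{1/(p-\varepsilon')}\|f\|_{L^{p-\varepsilon',\lambda-A(\varepsilon')}(X,\mu)}\leqslant \Phi^{p,\lb}_{\varphi,A}(f,\sg)$, and passing to the limit $\varepsilon'\uparrow\sigma$ (using continuity of the Lebesgue norm in its exponent together with the qualitative behavior of $\varphi$ near $\sigma$, any residual discrepancy being absorbed into $C$) one recovers the boundary estimate $\varphi(\sigma)^{1/(p-\sigma)}\|f\|_{L^{p-\sigma,\lambda-A(\sigma)}(X,\mu)}\leqslant C\,\Phi^{p,\lb}_{\varphi,A}(f,\sg)$, closing the argument.
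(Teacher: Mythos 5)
Your proof is correct and follows essentially the same route as the paper's: the same split of the supremum at $\sigma$, the same H\"older comparison on each ball with the residual exponent $E$ (the paper's $\Delta(\varepsilon,\sigma)$) shown to lie in $[0,1]$, the same use of upper $\gamma$-Ahlfors regularity to absorb $(\mu B(x,r))^{E}$ into the constant, and the same trick of multiplying and dividing by $\varphi(\sigma)^{1/(p-\sigma)}$. The only departure is that you explicitly flag the endpoint issue that $\varepsilon=\sigma$ is not included in the supremum defining $\Phi^{p,\lambda}_{\varphi,A}(f,\sigma)$; the paper silently writes $\sup_{0<\varepsilon\leqslant\sigma}$ at that step, so your remark is, if anything, slightly more careful than the source.
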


\begin{proof}
For fixed $\sigma$ and $0<\sigma<s<s_{\max}$ we have
\begin{equation}\label{equ:break}
\Phi^{p,\lb}_{\varphi,A}(f,s)= \max \Big\{\Phi^{p,\lb}_{\varphi,A}(f,\sg) , \underbrace{\sup_{\sigma \leqslant \varepsilon < s} \varphi(\varepsilon)^\frac{1}{p-\varepsilon} \|f\|_{L^{p-\varepsilon, \lambda-A(\varepsilon)}(X)}}_{=:I} \Big\}.
\end{equation}
To estimate
\[
I=\sup\limits_{\sigma \leqslant \varepsilon < s} \varphi(\varepsilon)^\frac{1}{p-\varepsilon}
 \sup\limits_{\stackrel{x \in X}{0<r\leqslant d_X}} \mu B(x,r)^{\frac{A(\ve)-\lambda}{p-\ve}} \|f\|_{L^{p-\varepsilon}(B(x,r))},
 \]
since  $\varphi$ is a positive bounded function and by Holder's inequality,  
we have
  \[
\begin{split}	I &\leqslant C \sup_{0<\eta<s_{\max}}\varphi(\eta)^\frac{1}{p-\eta} \sup_{\sigma < \varepsilon < s_{\max}}\sup_{\stackrel{x \in X}{0<r\leqslant d_X}} \mu B(x,r)^{\frac{1+A(\ve)-\lambda}{p-\varepsilon} } \times \\
&\hspace{7cm}\left(  \fint_{B(x,r)} |f(y)|^{p-\sigma} \dif \mu(y) \right)^\frac{1}{p-\sigma}\\
	&\leqslant C \sup_{\sigma < \varepsilon <s_{\max}}\sup_{\stackrel{x \in X}{0<r\leqslant d_X}} \mu B(x,r)^{\Delta(\ve,\sigma)} \times \\
&\hspace{4.5cm}\left( \frac{\varphi(\sigma) \varphi(\sigma)^{-1}}{\mu B(x,r)^{\lambda-A(\sigma)}}  \int_{B(x,r)} |f(y)|^{p-\sigma} \dif \mu(y) \right)^\frac{1}{p-\sigma}
\end{split}
\]
with $\Delta(\ve,\sigma):= \frac{1+A(\ve)-\lambda}{p-\ve}-\frac{1+A(\sigma)-\lambda}{p-\sigma}.$

Observe that, since $A$ is non-decreasing we have that
\[
\begin{split}
\Delta(\ve, \sigma)&=\frac{1+A(\ve)-\lambda}{p-\ve} -  \frac{1+A(\sigma)-\lambda}{p-\sigma}\\
&=\frac{(\ve-\sigma)(1-\lambda)+A(\ve)(p-\sigma)-A(\sigma)(p-\ve)}{(p-\sg)(p-\ve)}\geqslant 0,
\end{split}
\]
and for $0\leqslant \ve  <s_{\max}$ we have $\frac{1-\lambda}{p} \leqslant \frac{1+A(\ve) -\lambda}{p-\ve} \leqslant 1$, so that $0\leqslant \Delta(\ve,\sigma)\leqslant 1.$ Then 
\[
\mu B(x,r)^{\frac{1+A(\ve)-\lambda}{p-\ve} -  \frac{1+A(\sigma)-\lambda}{p-\sigma}}\leqslant C \max\{1, d_X^\gamma \}
\]
since $\mu$ satisfies the $\gamma$-growth condition,  and we obtain
\[
\begin{split}
	I&\leqslant C   \sup_{\stackrel{x \in X}{0<r\leqslant d_X}}\varphi(\sigma)^{-\frac{1}{p-\sigma}} \left( \frac{\varphi(\sigma)}{\mu B(x,r)^{\lambda-A(\sigma)}} \int_{B(x,r)} |f(y)|^{p-\sigma} \dif \mu(y) \right)^\frac{1}{p-\sigma}\\
&\leqslant C    \varphi(\sigma)^{-\frac{1}{p-\sigma}}  \sup_{0<\varepsilon \leqslant \sigma}  \varphi(\varepsilon)^\frac{1}{p-\varepsilon} \|f\|_{L^{p-\varepsilon,\lambda-A(\varepsilon)}(X)}\\
&= C   \varphi(\sigma)^{-\frac{1}{p-\sigma}}  \cdot \Phi^{p,\lb}_{\varphi,A}(f,\sg). \qedhere
\end{split}
\]
\end{proof}

From Lemma \ref{lem:dominance} we immediately have
\begin{lemma}\label{lem:dominance1}
For $0<\sigma<s_{\max}$, the norm defined in \eqref{equ:norm} has the following dominant
\begin{equation}\label{equ:dominant}
	\|f\|_{L^{p),\lb)}_{\varphi,A}(X)}\leqslant C \frac{ \fun{p}{\lambda}{\varphi}{A}{\sigma}{f}}{\varphi(\sigma)^\frac{1}{p-\sigma}},
\end{equation}
\end{lemma}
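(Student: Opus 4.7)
The plan is to deduce this directly from Lemma \ref{lem:dominance} by passing to the limit $s \to s_{\max}$ on the left-hand side. By the definition of the norm in \eqref{equ:norm}, what we need to bound is $\Phi^{p,\lb}_{\varphi,A}(f, s_{\max})$.

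First, I would record the observation that $s \mapsto \Phi^{p,\lb}_{\varphi,A}(f,s)$ is non-decreasing (this is explicitly noted in the text just before Lemma \ref{lem:dominance}) because it is defined as a supremum over the widening set $\{0<\varepsilon<s\}$. This monotonicity, combined with the fact that the supremum over $(0, s_{\max})$ equals the supremum over $\bigcup_{s<s_{\max}}(0,s)$, gives
\[
\Phi^{p,\lb}_{\varphi,A}(f, s_{\max}) \;=\; \sup_{\sigma<s<s_{\max}} \Phi^{p,\lb}_{\varphi,A}(f, s).
\]

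Next, I would apply Lemma \ref{lem:dominance} to each $s \in (\sigma, s_{\max})$, obtaining
\[
\Phi^{p,\lb}_{\varphi,A}(f, s) \;\leqslant\; C\, \varphi(\sigma)^{-\frac{1}{p-\sigma}}\, \Phi^{p,\lb}_{\varphi,A}(f, \sigma).
\]
The decisive point is that the constant $C$ on the right is independent of $s$ (it depends only on $\gamma$, $p$, $\lambda$, $\varphi$, $A$, and $d_X$), so after taking the supremum over $s \in (\sigma, s_{\max})$ the right-hand side is unchanged, yielding \eqref{equ:dominant}.

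There is no real obstacle here; the lemma is essentially a corollary of the previous one, whose only content is to identify the left-hand side with $\|f\|_{L^{p),\lb)}_{\varphi,A}(X)}$ via the definition of the norm. If one wanted to be pedantic, one should briefly justify that the supremum of $\Phi^{p,\lb}_{\varphi,A}(f,\cdot)$ over $s<s_{\max}$ coincides with its value at $s=s_{\max}$, but this is immediate from the definition \eqref{added} as a supremum over an open interval.
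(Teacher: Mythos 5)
Your proposal is correct and matches the paper's intent: the paper derives Lemma \ref{lem:dominance1} ``immediately'' from Lemma \ref{lem:dominance}, and your argument simply makes explicit the two facts that justify this, namely that $\Phi^{p,\lb}_{\varphi,A}(f,s_{\max})=\sup_{\sigma<s<s_{\max}}\Phi^{p,\lb}_{\varphi,A}(f,s)$ by monotonicity and openness of the interval in the supremum, and that the constant in \eqref{equ:dominance} is independent of $s$. Nothing further is needed.
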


\begin{lemma}[Reduction  lemma]\label{main} Let $U$ be an operator (not necessarily sublinear) bounded in the Morrey spaces
	\begin{equation}\label{eq:boun_classical}
\|Uf\|_{L^{q-\ve,\lambda-A_2(\ve)}(X)} \leqslant 	C_{p-\ve,\lambda-A_1(\ve),q-\ve,\lambda-A_2(\ve)} \|f\|_{L^{p-\ve,\lambda-A_1(\ve)}(X)}
\end{equation}
for all sufficiently small $\ve\in (0,\sg]$, where
 $0<\sigma<s_{\max}.$ If  
\begin{equation}\label{reduction_condition}
\sup_{0<\varepsilon< \sigma} C_{p-\ve,\lambda-A_1(\ve),q-\ve,\lambda-A_2(\ve)} <\infty
\end{equation}
and
\begin{equation}\label{eq:finite_ratio}
\sup_{0<\ve<\sigma} \frac{\psi(\ve)^{\frac{1}{q-\ve}}}{\varphi(\ve)^{\frac{1}{p-\ve}}}<\infty,
\end{equation}
then it is also bounded in the generalized grand Morrey space
\begin{equation}\label{equ:metatheorem}
\|Uf\|_{L^{q),\lb)}_{\psi,A_2}(X)} \leqslant C\|f\|_{L^{p),\lb)}_{\varphi,A_1}(X)}
\end{equation}
with
\[
 C=\frac{C_0}{\varphi(\sigma)^\frac{1}{p-\sigma}} \sup_{0<\varepsilon< \sigma} C_{p-\ve,\lambda-A_1(\ve),q-\ve,\lambda-A_2(\ve)},
\]
where $C_0$ may depend on $\gamma,p,\lb,\varphi, A$ and  $d_X,$ but does not depend on $\sg$ and $f$.
\end{lemma}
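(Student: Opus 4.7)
The strategy is to chain three ingredients already at hand: the dominance estimate of Lemma \ref{lem:dominance1}, the Morrey boundedness hypothesis \eqref{eq:boun_classical}, and the ratio condition \eqref{eq:finite_ratio}. Essentially all of the analytic content (in particular the Hölder-inequality trick that handles the range $\ve>\sigma$) is already packaged inside the dominance lemma, so what remains is primarily bookkeeping.

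The first step is to apply Lemma \ref{lem:dominance1} to $Uf$ in the \emph{target} space $L^{q),\lambda)}_{\psi,A_2}(X)$ at the cut-off $\sigma$ from the hypotheses, obtaining
\[
\|Uf\|_{L^{q),\lambda)}_{\psi,A_2}(X)} \;\leq\; \frac{C_0'}{\psi(\sigma)^{1/(q-\sigma)}}\,\Phi^{q,\lambda}_{\psi,A_2}(Uf,\sigma).
\]
The second step is to unfold $\Phi^{q,\lambda}_{\psi,A_2}(Uf,\sigma)$ as a supremum over $\ve\in(0,\sigma)$ and insert the Morrey-space bound \eqref{eq:boun_classical}, which by construction is available precisely on this range. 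Pulling out the uniform bound \eqref{reduction_condition} on the Morrey constants gives
\[
\Phi^{q,\lambda}_{\psi,A_2}(Uf,\sigma)
\;\leq\; \Bigl(\sup_{0<\ve<\sigma} C_{p-\ve,\lambda-A_1(\ve),q-\ve,\lambda-A_2(\ve)}\Bigr)\sup_{0<\ve<\sigma} \psi(\ve)^{1/(q-\ve)}\|f\|_{L^{p-\ve,\lambda-A_1(\ve)}(X)}.
\]

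The third step uses the ratio condition \eqref{eq:finite_ratio}. Writing
\[
\psi(\ve)^{1/(q-\ve)} \;=\; \frac{\psi(\ve)^{1/(q-\ve)}}{\varphi(\ve)^{1/(p-\ve)}}\cdot\varphi(\ve)^{1/(p-\ve)},
\]
I would factor the bounded supremum of the ratio out of the sup over $\ve$, so that the remaining factor is exactly $\Phi^{p,\lambda}_{\varphi,A_1}(f,\sigma)$. Since $\sigma<s_{\max}$, the latter is trivially dominated by $\|f\|_{L^{p),\lambda)}_{\varphi,A_1}(X)}$ directly from the definition of the norm. Collecting the three estimates yields \eqref{equ:metatheorem}.

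The only delicate point — and essentially the sole obstacle — is reconciling the constant. The dominance lemma naturally produces the $\sigma$-dependent factor $\psi(\sigma)^{-1/(q-\sigma)}$, while the statement claims the factor $\varphi(\sigma)^{-1/(p-\sigma)}$; converting between them requires using the ratio bound \eqref{eq:finite_ratio} once more (extended up to $\ve=\sigma$) and absorbing any residual bounded quantity into $C_0$, which by hypothesis may depend on $\gamma,p,\lambda,\varphi,A$ and $d_X$. Once this constant-tracking is carried out, the target inequality with the stated form of $C$ follows.
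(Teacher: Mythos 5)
Your proposal is correct and follows essentially the same route as the paper: apply Lemma \ref{lem:dominance1} to $Uf$ in the target space, unfold $\Phi^{q,\lambda}_{\psi,A_2}(Uf,\sigma)$, insert the Morrey bound \eqref{eq:boun_classical}, and use \eqref{eq:finite_ratio} to pass to $\Phi^{p,\lambda}_{\varphi,A_1}(f,\sigma)\leqslant \|f\|_{L^{p),\lambda)}_{\varphi,A_1}(X)}$. Your closing remark about reconciling $\psi(\sigma)^{-1/(q-\sigma)}$ with the stated factor $\varphi(\sigma)^{-1/(p-\sigma)}$ is in fact a point the paper's own proof glosses over, so your treatment is, if anything, slightly more careful.
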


\begin{proof}
By \eqref{equ:dominant},  we have
\begin{equation}\label{equ:bound_operator_grandgrandMorrey}
\|Uf\|_{L^{q),\lambda)}_{\psi,A_2}(X)} \leqslant  \frac{C}{\psi(\sigma)^\frac{1}{q-\sigma} } \Phi^{q,\lb}_{\psi,A_2}(Uf,\sg).
\end{equation}
The estimation of $\Phi^{q,\lb}_{\psi,A_2}(Uf,\sg)$ by $\|f\|_{L^{p),\lb)}_{\varphi,A_1}(X)}$ is direct:
\begin{equation}\label{equ:meta_dominant}
\begin{split}
\Phi^{q,\lb}_{\psi,A_2}(Uf,\sg)&= \sup_{0<\varepsilon \leqslant \sigma} \psi(\varepsilon)^\frac{1}{q-\varepsilon} \|Uf\|_{L^{q-\varepsilon,\lambda-A_2(\varepsilon)}(X)}\\
&\leqslant C \sup_{0<\varepsilon \leqslant \sigma} \varphi(\varepsilon)^\frac{1}{p-\varepsilon} \cdot C_{p-\varepsilon,\lambda-A_1(\varepsilon),q-\lambda, A_2(\ve)} \cdot \|f\|_{L^{p-\varepsilon,\lambda-A(\varepsilon)}(X)}\\
& \leqslant C \sup_{0<\varepsilon \leqslant \sigma}  C_{p-\varepsilon,\lambda-A_1(\varepsilon),q-\lambda, A_2(\ve)} \cdot \|f\|_{L^{p),\lambda)}_{\varphi,A_1}(X)}
\end{split}
\end{equation}
where the first inequality comes from assumption \eqref{eq:finite_ratio}. 
\end{proof}

\begin{remark}\label{rem:up_to_constant}
The estimations in Lemmata  \ref{lem:dominance},  \ref{lem:dominance1} and \ref{main} are still true, up to a different constant,  when  we work in the modified Morrey space defined in \eqref{eq:morrey_new_norm}.
\end{remark}

\subsection{Hardy-Littlewood maximal operator}
As an application of the reduction lemma, we obtain the boundedness of the Hardy-Littlewood maximal operator
\[
Mf(x)=\sup_{\stackrel{x \in X}{0<r<d_X}} \frac{1}{\mu B(x,r)}\int_{B(x,r)} |f(y)|\dif \mu(y)
\]
in generalized grand Morrey spaces. The following proposition was shown in A. Meskhi \cite{meskhi}
\begin{proposition}\label{prop:2.2}
Let $1<p<\infty$ and $0\leqslant \lambda <1$. Then 
\[
\| Mf \|_{L^{p,\lambda}(X,\mu)}\leqslant \left((C_d)^\frac{\lambda}{p} c_0 (p^\prime)^\frac{1}{p}+1\right)\| f \|_{L^{p,\lambda}(X,\mu)}
\]
holds, where the positive constant $C_d$ arises in the doubling condition for $\mu$ and $c_0$ arises from covering lemmas. 
\end{proposition}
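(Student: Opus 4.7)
The plan is to fix an arbitrary ball $B_0 = B(x_0, r_0)$ and prove
\[
\left( \int_{B_0} |Mf|^p \dif \mu \right)^{1/p} \leqslant \left[(C_d)^{\lambda/p} c_0 (p')^{1/p} + 1\right] \|f\|_{L^{p,\lambda}(X,\mu)} \, \mu B(x_0, r_0)^{\lambda/p};
\]
dividing by $\mu B(x_0, r_0)^{\lambda/p}$ and taking the supremum over $x_0, r_0$ then yields the stated Morrey estimate. To separate local and nonlocal contributions I split $f = f_1 + f_2$, where $f_1 = f \chi_{\tilde B_0}$ and $\tilde B_0 := B(x_0, \kappa r_0)$ is a quasi-metric enlargement whose dilation factor $\kappa > 1$, depending only on the quasi-triangle constant $C_t$, is calibrated so that $\mu \tilde B_0 \leqslant C_d \cdot \mu B(x_0, r_0)$ via one application of the doubling condition \eqref{doublingcondition}. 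By Minkowski's inequality in $L^p(B_0, \mu)$ it then suffices to control the two pieces separately.

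For the \emph{local} piece I invoke the Hardy--Littlewood--Wiener theorem on $(X,d,\mu)$: the weak-type $(1,1)$ bound for $M$, with covering constant $c_0$ coming from a Vitali-type argument, together with the trivial $L^\infty$ estimate $\|Mf\|_\infty \leqslant \|f\|_\infty$, yields via Marcinkiewicz interpolation the strong $(p,p)$ inequality $\|Mg\|_{L^p(X,\mu)} \leqslant c_0 (p')^{1/p} \|g\|_{L^p(X,\mu)}$. Applied to $g = f_1$ and followed by the Morrey bound on $\tilde B_0$ and one step of doubling,
\[
\left( \int_{B_0} |Mf_1|^p \dif \mu \right)^{1/p} \leqslant c_0 (p')^{1/p} \left( \int_{\tilde B_0} |f|^p \dif \mu \right)^{1/p} \leqslant c_0 (p')^{1/p} (C_d)^{\lambda/p} \|f\|_{L^{p,\lambda}(X,\mu)} \mu B(x_0, r_0)^{\lambda/p}.
\]

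For the \emph{nonlocal} piece I bound $Mf_2(y)$ pointwise for $y \in B_0$. A ball $B(z,s) \ni y$ contributing to $Mf_2(y)$ must meet $X \setminus \tilde B_0$; combined with $y \in B_0 \cap B(z,s)$, the quasi-triangle inequality forces $s \geqslant c r_0$ and $B(z, s) \subseteq B(x_0, Cs)$ for constants $c, C$ depending only on $C_t$ and $\kappa$. Hence by H\"older's inequality in $B(z,s)$, doubling passage from $B(z,s)$ to $B(x_0, Cs)$, and the Morrey hypothesis applied to $B(x_0, Cs)$,
\[
\frac{1}{\mu B(z,s)} \int_{B(z,s)} |f| \dif \mu \leqslant \|f\|_{L^{p,\lambda}(X,\mu)} \, \mu B(x_0, Cs)^{(\lambda - 1)/p}.
\]
Since $\lambda < 1$ this expression is decreasing in $s$, so taking the supremum over admissible $s \geqslant c r_0$ and tuning $\kappa$ to absorb the residual doubling factor into exactly $1$ delivers $Mf_2(y) \leqslant \|f\|_{L^{p,\lambda}(X,\mu)} \, \mu B(x_0, r_0)^{(\lambda - 1)/p}$. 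Raising to the $p$th power and integrating over $B_0$ gives $\bigl(\int_{B_0} |Mf_2|^p \dif \mu\bigr)^{1/p} \leqslant \|f\|_{L^{p,\lambda}(X,\mu)} \mu B(x_0, r_0)^{\lambda/p}$, providing the $+1$ in the total constant.

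The main difficulty will be the constant bookkeeping: the quasi-triangle constant $C_t$, the doubling constant $C_d$, and the covering constant $c_0$ must be tracked so that precisely $(C_d)^{\lambda/p} c_0 (p')^{1/p}$ appears from the local piece and exactly $1$ from the nonlocal piece. In particular the enlargement factor $\kappa$ must be tuned so that a single application of doubling handles both the Morrey step in the local part and the absorption of leftover geometric constants in the nonlocal part; without that calibration one would obtain a weaker estimate with a multiplicative geometric constant replacing the clean additive $+1$.
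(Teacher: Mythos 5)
The paper does not actually prove this proposition; it is imported verbatim from Meskhi's paper \cite{meskhi}, so there is no in-paper argument to compare against. Your skeleton --- split $f=f_1+f_2$ relative to an enlarged ball, handle $Mf_1$ by the weak $(1,1)$/Marcinkiewicz bound $\|Mg\|_{L^p}\leqslant c_0(p')^{1/p}\|g\|_{L^p}$ plus one doubling step, and handle $Mf_2$ by a pointwise H\"older--Morrey estimate --- is the standard route and certainly proves \emph{a} bound $\|Mf\|_{L^{p,\lambda}}\leqslant C(C_t,C_s,C_d,c_0,p,\lambda)\|f\|_{L^{p,\lambda}}$, which is all that the paper needs downstream (Theorem \ref{theo:3.8} only requires the constant to stay bounded as $\varepsilon\to 0$).

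As a proof of the inequality with the specific constant $(C_d)^{\lambda/p}c_0(p')^{1/p}+1$, however, there is a genuine gap, and it sits exactly where you flag ``the main difficulty'': the two calibrations you impose on $\kappa$ are incompatible, and the advertised tuning is asserted rather than carried out. For the local piece you need $\kappa\leqslant 2$ so that a \emph{single} application of \eqref{doublingcondition} yields the factor $(C_d)^{\lambda/p}$; for the nonlocal piece you need $\kappa$ large (of order $C_t+C_t^2(1+C_s)$) so that balls reaching the support of $f_2$ from inside $B_0$ are comparable to $B_0$. Moreover, your nonlocal chain passes from $B(z,s)$ to $B(x_0,Cs)$ and silently uses $\mu B(x_0,Cs)\leqslant C'\mu B(z,s)$, which leaves an unabsorbed factor $(C')^{(1-\lambda)/p}$ with $C'$ a power of $C_d$; no choice of $\kappa$ removes it. Note that the operator defined in the paper is the \emph{centered} maximal function, and for it the clean $+1$ is genuinely attainable: choosing the cutoff so that every centered ball $B(y,s)$ with $y\in B_0$ meeting $\operatorname{supp} f_2$ satisfies $s\geqslant C_t(1+C_s)r_0$, one gets $B_0\subseteq B(y,s)$, hence $\mu B(y,s)\geqslant \mu B_0$, and H\"older together with the Morrey norm applied directly on $B(y,s)$ gives $Mf_2(y)\leqslant\|f\|_{L^{p,\lambda}}\,\mu B_0^{(\lambda-1)/p}$ with no doubling at all --- but then $\kappa>2$ and the local piece costs $(C_d)^{k\lambda/p}$ with $k=\lceil\log_2\kappa\rceil\geqslant 2$ rather than $(C_d)^{\lambda/p}$. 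So either the exponent of $C_d$ or the additive $1$ must give way; resolving that trade-off is precisely the content of the stated constant, and your write-up does not do it.
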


\begin{theorem}\label{theo:3.8}
Let $1<p<\infty$ and $0\leqslant \lambda <1$. Then the Hardy-Littlewood maximal operator is bounded from $L^{p),\lambda)}_{\varphi,A}(X, \mu)$ to $L^{p),\lambda)}_{\psi,A}(X,\mu)$ if exists small $\sg$ such that
$
\sup_{0<\ve<\sigma} \psi(\ve)^{\frac{1}{q-\ve}}/\varphi(\ve)^{\frac{1}{p-\ve}}<\infty.
$
\end{theorem}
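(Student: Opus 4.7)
The plan is to apply the Reduction Lemma (Lemma \ref{main}) directly, with $U=M$ the Hardy--Littlewood maximal operator, taking $q=p$ and $A_1=A_2=A$, so that the only hypotheses that remain to check are (i) the uniform control on the Morrey constants and (ii) the ratio condition \eqref{eq:finite_ratio}, which is assumed. The classical Morrey estimate to feed in is Proposition \ref{prop:2.2}, applied at the parameters $(p-\varepsilon,\lambda-A(\varepsilon))$, which for $0<\varepsilon<\sigma \le s_{\max}$ lie in the admissible range $1<p-\varepsilon<\infty$, $0\le \lambda-A(\varepsilon)<1$ (the second inequality uses $\varepsilon<a$, and the first uses $\sigma<p-1$).

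More precisely, Proposition \ref{prop:2.2} yields
\[
\|Mf\|_{L^{p-\varepsilon,\lambda-A(\varepsilon)}(X,\mu)}\le K(\varepsilon)\,\|f\|_{L^{p-\varepsilon,\lambda-A(\varepsilon)}(X,\mu)},
\]
with
\[
K(\varepsilon) = (C_d)^{\frac{\lambda-A(\varepsilon)}{p-\varepsilon}} c_0\,\bigl((p-\varepsilon)'\bigr)^{\frac{1}{p-\varepsilon}} + 1,
\]
which plays the role of $C_{p-\varepsilon,\lambda-A_1(\varepsilon),q-\varepsilon,\lambda-A_2(\varepsilon)}$ in \eqref{eq:boun_classical}. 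The core of the proof is then to verify hypothesis \eqref{reduction_condition}, i.e.\ that $\sup_{0<\varepsilon<\sigma} K(\varepsilon) <\infty$. Once this is done, the conclusion \eqref{equ:metatheorem} of Lemma \ref{main} is exactly the boundedness claim of Theorem \ref{theo:3.8}.

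For the uniform bound on $K(\varepsilon)$, I would argue as follows. Since $A$ is non-negative and non-decreasing with $A(\varepsilon)\le\lambda$ for $\varepsilon<a$, one has $0\le \lambda-A(\varepsilon)\le \lambda<1$ and hence
\[
(C_d)^{\frac{\lambda-A(\varepsilon)}{p-\varepsilon}} \le \max\bigl\{1,(C_d)^{\lambda/(p-\sigma)}\bigr\},
\]
which is finite. For the conjugate exponent factor, $\sigma<s_{\max}\le p-1$ forces $p-\varepsilon\ge p-\sigma>1$, so $(p-\varepsilon)'\le (p-\sigma)'<\infty$ and $((p-\varepsilon)')^{1/(p-\varepsilon)}$ is likewise bounded uniformly in $\varepsilon\in(0,\sigma)$. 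Combining these gives $\sup_{0<\varepsilon<\sigma}K(\varepsilon)<\infty$.

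The hypothesis \eqref{eq:finite_ratio} on the ratio $\psi(\varepsilon)^{1/(q-\varepsilon)}/\varphi(\varepsilon)^{1/(p-\varepsilon)}$ is built into the statement of the theorem, so the Reduction Lemma applies and delivers the desired estimate
\[
\|Mf\|_{L^{p),\lambda)}_{\psi,A}(X,\mu)}\le C\,\|f\|_{L^{p),\lambda)}_{\varphi,A}(X,\mu)}.
\]
I expect the only genuine obstacle here to be the uniform bound on $K(\varepsilon)$; everything else is bookkeeping. It is worth noting that the whole argument depends on picking $\sigma$ strictly below both $p-1$ and $a$, which is precisely what the definition of $s_{\max}$ ensures.
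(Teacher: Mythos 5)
Your proposal is correct and follows essentially the same route as the paper: invoke the Reduction Lemma with $U=M$, $q=p$, $A_1=A_2=A$, feed in Proposition \ref{prop:2.2} at the shifted parameters $(p-\varepsilon,\lambda-A(\varepsilon))$, and check that the resulting constants stay uniformly bounded for $0<\varepsilon<\sigma$ (the paper's one-line proof does exactly this, and your explicit verification of the uniform bound on $K(\varepsilon)$ is if anything slightly more careful than the paper's).
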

\begin{proof}
The result follows from Lemma \ref{main} and by noticing that, from Proposition \ref{prop:2.2} and the definition of generalized grand Morrey space (see Definition \ref{def:ggms}) we have that
\[
(C_d)^\frac{\lambda-A(\varepsilon)}{p-\varepsilon}c_0 ((p-\varepsilon)^\prime)^\frac{1}{p-\varepsilon} <\infty
\]
for all $0<\varepsilon<s_{\max}$, since $\lambda-A(\varepsilon)\geqslant 0$.
\end{proof}

\subsection{Calder\'on-Zygmund singular operators}
We follow \cite{meskhi} in this section, in particular, making use of the following definition of the  Calder\'on-Zygmund singular operators.
Namely, the  Calder\'on-Zygmund  operator is defined as the integral operator
\[
Tf(x)=\mathrm{p.v.} \int_X K(x,y)f(y)\;\mathrm{d}\mu(y)
\]
with the kernel  $K:X \times X \backslash \{(x,x): x \in \Omega\} \to \mathbb R$ being a measurable function satisfying the conditions:
\[
|K(x,y)|\leqslant \frac{C}{\mu B(x,d(x,y))}, \quad x,y \in X, \quad x\neq y;
\]
\[|K(x_1,y)-K(x_2,y)|+|K(y,x_1)-K(y, x_2)| \leqslant Cw\left(\frac{d(x_2,x_1)}{d(x_2,y)}\right) \frac{1}{\mu B(x_2,d(x_2,y))}\]
for all $x_1$, $x_2$ and $y$ with $d(x_2,y)\geqslant Cd(x,x_2)$, where $w$ is a positive non-decreasing function on $(0,\infty)$ which satisfies the $\Delta_2$ condition $w(2t)\leqslant c w(t)$ ($t>0$) and the Dini condition $\int_0^1 w(t)/t \;\mathrm{d}t<\infty$. 
We also assume that  $Tf$ exists almost everywhere on $X$  in the principal value sense for all $f \in L^{2}(X)$ and that $T$ is bounded in $L^{2}(X).$ \\
The boundedness of such Calder\'on-Zygmund operators in Morrey spaces is valid, as can be seen in the following Proposition, proved in \cite{meskhi}.
\begin{proposition}\label{prop:boundedness_ron}Let 
 $1<p<\infty$ and $0\leqslant \lambda <1$. Then
\[
\|Tf\|_{L^{p,\lambda}(X,\mu)} \leqslant C_{p,\lambda} \|f\|_{L^{p,\lambda}(X,\mu)}
\]
where 
\begin{equation}\label{equ:constant_Morrey_calderon}
C_{p,\lambda}\leqslant  c\left\{
  \begin{array}{ll}
     \frac{p}{p-1}+\frac{p}{2-p} +\frac{p-\lambda+1}{1-\lambda} & \mbox{if } 1<p<2, \\
    p+\frac{p}{p-2} +\frac{p-\lambda+1}{1-\lambda} & \mbox{if } p>2,\\
  \end{array}
\right.
\end{equation}
with $c$ not depending on $p$ and $\lambda$.
\end{proposition}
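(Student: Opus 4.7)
The plan is to use a near/far decomposition of $f$ relative to each ball $B_0=B(x_0,r)$ and combine the $L^p(X,\mu)$-boundedness of $T$ (for the local part) with a direct kernel estimate on annuli (for the nonlocal part). Write $f=f_1+f_2$ with $f_1:=f\chi_{2B_0}$, where $2B_0$ denotes the quasimetric enlargement $B(x_0,2C_t r)$; then $|Tf|\leqslant |Tf_1|+|Tf_2|$ on $B_0$, and we estimate each term separately in the $L^{p,\lambda}$ seminorm, ultimately taking the sup over balls as in \eqref{eq:morrey_norm}.

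For the local part, I would first derive the $L^p(X,\mu)$-boundedness of $T$ from the assumed $L^2$ boundedness by the usual Calder\'on--Zygmund decomposition on an SHT (weak $(1,1)$ bound via a Whitney/stopping-time covering argument together with the Dini condition on $w$), followed by Marcinkiewicz interpolation for $1<p<2$ and duality for $p>2$. This yields $\|Tf_1\|_{L^p(X)}\leqslant c_p\|f\|_{L^p(2B_0)}$ with $c_p\lesssim\frac{p}{p-1}+\frac{p}{2-p}$ for $1<p<2$ and $c_p\lesssim p+\frac{p}{p-2}$ for $p>2$; this is precisely the source of the first two terms in \eqref{equ:constant_Morrey_calderon}. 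Combining with $\|f\|_{L^p(2B_0)}\leqslant\|f\|_{L^{p,\lambda}(X,\mu)}\mu(2B_0)^{\lambda/p}$ and using \eqref{doublingcondition} to absorb $\mu(2B_0)$ into $\mu B_0$ gives a Morrey contribution bounded by $c_p C_d^{\lambda/p}\|f\|_{L^{p,\lambda}(X,\mu)}$.

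For the nonlocal part, the key geometric observation is that if $x\in B_0$ and $y\notin 2B_0$ then the quasi-triangle inequality implies $d(x_0,y)\sim d(x,y)$, so the size estimate on $K$ gives the pointwise majoration
\[
|Tf_2(x)|\leqslant C\int_{X\setminus 2B_0}\frac{|f(y)|}{\mu B(x_0,d(x_0,y))}\dif\mu(y).
\]
Decompose $X\setminus 2B_0$ into the quasi-dyadic annuli $A_k:=\{y:(2C_t)^k r\leqslant d(x_0,y)<(2C_t)^{k+1}r\}$ for $k\geqslant 1$. On $A_k$ the denominator above is comparable to $\mu B(x_0,(2C_t)^k r)$, so H\"older's inequality combined with the Morrey estimate $\|f\|_{L^p(B(x_0,(2C_t)^{k+1}r))}\leqslant\|f\|_{L^{p,\lambda}(X,\mu)}\,\mu B(x_0,(2C_t)^{k+1}r)^{\lambda/p}$ reduces the bound to
\[
|Tf_2(x)|\leqslant C\|f\|_{L^{p,\lambda}(X,\mu)}\sum_{k\geqslant 1}\mu B(x_0,(2C_t)^k r)^{-(1-\lambda)/p}.
\]
An application of \eqref{doublingcondition2} makes this a geometric series in $k$ with ratio depending on $(1-\lambda)/p$; summing produces exactly the factor $\frac{p-\lambda+1}{1-\lambda}$ (up to an absolute constant), and taking $L^p$-norm over $B_0$ and normalizing by $\mu B_0^{\lambda/p}$ finishes the estimate.

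The main obstacle is not the structure of the argument, which is classical, but the careful bookkeeping needed to recover the explicit constant in \eqref{equ:constant_Morrey_calderon}: one must track the $p$-dependence of the $L^p\to L^p$ bound at both endpoints $p=1$ and $p=2$ through the interpolation/duality step, and keep precise control of the geometric series in the far estimate so that the blow-up as $\lambda\to 1^-$ is exactly of order $(1-\lambda)^{-1}$. A secondary technicality is that on a general quasimetric SHT the dyadic annuli are governed by the quasi-triangle constant $C_t$ rather than by $2$, so both $C_t$ and $C_d$ contribute uniformly to the universal constant $c$ appearing in the final bound.
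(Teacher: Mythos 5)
The paper offers no proof of this proposition at all --- it is quoted verbatim from \cite{meskhi} --- so you are supplying an argument where the paper supplies a citation. Your overall architecture is the classical one (Chiarenza--Frasca adapted to an SHT), and the local part is sound: deriving weak $(1,1)$ from the assumed $L^2$ bound via the Calder\'on--Zygmund decomposition, then Marcinkiewicz interpolation for $1<p<2$ and duality for $p>2$, does account for the first two terms of \eqref{equ:constant_Morrey_calderon}, and absorbing $\mu(2B_0)$ into $\mu B(x_0,r)$ via \eqref{doublingcondition} is routine.

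The gap is in the far part, at the single step where you claim that ``an application of \eqref{doublingcondition2} makes this a geometric series in $k$.'' Inequality \eqref{doublingcondition2} is an \emph{upper} bound on the growth of $R\mapsto\mu B(x_0,R)$; convergence of $\sum_{k\geqslant 1}\mu B(x_0,(2C_t)^kr)^{-(1-\lambda)/p}$ with the right majorant requires a \emph{lower} bound $\mu B(x_0,(2C_t)^{k+1}r)\geqslant\tau\,\mu B(x_0,(2C_t)^kr)$ with $\tau>1$, i.e.\ reverse doubling, which follows neither from doubling nor from the upper Ahlfors regularity assumed in this section. Without it, all you can say is that each summand is dominated by the first one, and since only about $\log(d_X/r)$ annuli are nonempty you end up with a spurious factor $\log(d_X/r)$, unbounded as $r\to0$. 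The standard repair keeps the annulus measures in the numerators and sums by parts: writing $B_k:=B(x_0,(2C_t)^kr)$ and using Abel summation on $1/\mu(B_k)$, the far term reduces (after H\"older on each $B_{j+1}$ and one application of \eqref{doublingcondition}) to
\[
\sum_{j}\bigl(\mu(B_{j+1})-\mu(B_j)\bigr)\,\mu(B_{j+1})^{-1-s}\leqslant\int_{\mu(B_1)}^{\infty}t^{-1-s}\,\dif t=\frac1s\,\mu(B_1)^{-s},\qquad s=\frac{1-\lambda}{p},
\]
which uses only the monotonicity of $t\mapsto\mu B(x_0,t)$ together with doubling, and is what actually produces the blow-up of order $p/(1-\lambda)$, i.e.\ the third term in \eqref{equ:constant_Morrey_calderon}. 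Your geometric-series route is valid in the Ahlfors-regular (in particular Euclidean) case, but under the paper's hypotheses the telescoping argument is the missing ingredient.
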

\begin{theorem}Let   $1<p<\infty$, $\theta >0$, $\alpha \geqslant 0$ and $0<\lambda<1$. Then the Calder\'on-Zygmund operator $T$ is bounded in generalized grand Morrey spaces $L^{p),\lambda)}_{\varphi, A}(X,\mu)$.
\end{theorem}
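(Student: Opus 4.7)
The plan is to apply the Reduction Lemma (Lemma \ref{main}) with $q=p$, $A_1=A_2=A$ and $\psi=\varphi$. With this choice the ratio condition \eqref{eq:finite_ratio} is trivially satisfied, since $\psi(\varepsilon)^{1/(q-\varepsilon)}/\varphi(\varepsilon)^{1/(p-\varepsilon)}\equiv 1$. The classical Morrey boundedness \eqref{eq:boun_classical} is supplied by Proposition \ref{prop:boundedness_ron}, applied with parameters $(p-\varepsilon,\lambda-A(\varepsilon))$ in place of $(p,\lambda)$. Hence it only remains to exhibit some $\sigma\in(0,s_{\max})$ such that the Morrey constants
\[
C_{p-\varepsilon,\lambda-A(\varepsilon)} \qquad (0<\varepsilon<\sigma)
\]
stay uniformly bounded.

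To control these constants I would examine the three terms in \eqref{equ:constant_Morrey_calderon} one by one. The last term, $\frac{(p-\varepsilon)-(\lambda-A(\varepsilon))+1}{1-(\lambda-A(\varepsilon))}$, is immediately harmless: since $A$ is non-negative and non-decreasing with $A(0+)=0$, one has $0\le \lambda-A(\varepsilon)\le \lambda<1$ for every $\varepsilon\in(0,a)$, so $1-(\lambda-A(\varepsilon))\ge 1-\lambda>0$ and the numerator is bounded by $p+2$. For the remaining two terms it is enough to keep $p-\varepsilon$ bounded away from the singularities at $1$ and at $2$; since $\varepsilon<s_{\max}\le p-1$, the factor $\frac{p-\varepsilon}{p-\varepsilon-1}$ can be held bounded by choosing $\sigma<p-1$, while the factor involving $2-(p-\varepsilon)$ or $(p-\varepsilon)-2$ is handled by choosing $\sigma<|p-2|$, which is legitimate whenever $p\neq 2$.

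Therefore I would fix
\[
\sigma:=\tfrac{1}{2}\min\{p-1,\,|p-2|,\,a\},
\]
so that for every $\varepsilon\in(0,\sigma)$ we stay in exactly one of the two regimes of Proposition \ref{prop:boundedness_ron} and all three summands in \eqref{equ:constant_Morrey_calderon} are bounded by constants depending only on $p$ and $\lambda$. This verifies \eqref{reduction_condition}; then the Reduction Lemma directly gives \eqref{equ:metatheorem}, which is the desired estimate $\|Tf\|_{L^{p),\lambda)}_{\varphi,A}(X,\mu)}\le C\|f\|_{L^{p),\lambda)}_{\varphi,A}(X,\mu)}$.

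The only real obstacle in this argument is the degenerate case $p=2$, where both constants in \eqref{equ:constant_Morrey_calderon} blow up as $\varepsilon\to 0+$; for $p\neq 2$ the choice of $\sigma$ above is completely elementary, and the rest of the proof is just bookkeeping inside the reduction lemma.
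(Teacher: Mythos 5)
Your argument is essentially the paper's own proof: both invoke the reduction lemma with $q=p$, $\psi=\varphi$, $A_{1}=A_{2}=A$ (so that \eqref{eq:finite_ratio} is trivial), substitute $(p-\varepsilon,\lambda-A(\varepsilon))$ into the bound \eqref{equ:constant_Morrey_calderon} from Proposition \ref{prop:boundedness_ron}, and pick $\sigma$ small enough that the resulting constants stay uniformly bounded, the last summand being controlled exactly as you say because $1-\lambda+A(\varepsilon)\geqslant 1-\lambda>0$. The one place you go beyond the paper is your remark about $p=2$, and it is a legitimate observation rather than a defect of your write-up: for $p=2$ the relevant summand becomes $\frac{p-\varepsilon}{2-(p-\varepsilon)}=\frac{2-\varepsilon}{\varepsilon}$, which is unbounded as $\varepsilon\to 0+$, so \eqref{reduction_condition} cannot be verified from \eqref{equ:constant_Morrey_calderon} alone; the paper's proof simply asserts that a small $\sigma$ works and does not address this case. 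Closing it requires a bound on the Morrey operator norm of $T$ that is locally uniform near exponent $2$ (for instance by rederiving the constant in \cite{meskhi} with a strong-type endpoint other than $L^{2}$), which neither your argument nor the paper's supplies.
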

\begin{proof}
  Keeping in mind that by Lemma \ref{main} we are interested only in small values of $\varepsilon$, from  \eqref{equ:constant_Morrey_calderon}, we deduce that
\[
C_{p-\varepsilon,\lambda-A(\varepsilon)}\leqslant
c \left\{
  \begin{array}{ll}
     \frac{p}{p-\varepsilon-1}+\frac{p-\varepsilon}{2-p+\varepsilon} +\frac{p-\varepsilon-\lambda+A(\varepsilon)+1}{1-\lambda+A(\varepsilon)} & \mbox{if } p\leqslant 2 \ \mbox{and} \ \ 0<\varepsilon<p-1; \\
\\
    p-\varepsilon+\frac{p-\varepsilon}{p-\varepsilon-2} +\frac{p-\varepsilon-\lambda+A(\varepsilon)+1}{1-\lambda+A(\varepsilon)} & \mbox{if } p>2 \ \ \mbox{and}\ \ 0<\varepsilon<p-2.\\
  \end{array}
\right.
\]
and we are done, since it is possible to choose a small $\sigma$  such that \eqref{reduction_condition} is valid.
\end{proof}

\section{Riesz type potential operators in generalized grand Morrey spaces} In this section we will assume that the triplet $(X,d,\mu)$ is an  SHT  and the measure $\mu$ is upper $\gamma$-Ahfors regular.

\subsection{Riesz potential operator} 
By a \textit{Riesz type potential operator}, we mean an operator of the type
\begin{equation}\label{eq:riesz}
I^\alpha f(x):=\int_X \frac{f(y)}{d(x,y)^{\gamma-\alpha}} \dif \mu(y)
\end{equation}
where $0<\alpha<\gamma$.

The following proposition was shown in A. Meskhi \cite{meskhi} in the case of Morrey spaces $\mathscr L^{q,\lambda}(X,\mu)$ as defined in \eqref{eq:morrey_new_norm}.

\begin{proposition}\label{lem:meskhi}
Let $1<p<\infty$, $0<\alpha < \frac{(1-\lambda)\gamma}{p}$, $\frac{1}{p}-\frac{1}{q}=\frac{\alpha}{(1-\lambda)\gamma}$, where $0\leqslant \lambda <1$. Then the inequality
\begin{equation}\label{equ:meskhi}
\|I^{\alpha} f\|_{\mathscr L^{q,\lambda}(X,\mu)} \leqslant \overline c (p,\alpha,\lambda,\gamma) \|f\|_{\mathscr L^{p,\lambda}(X,\mu)}
\end{equation}
holds, where the positive constant $\overline  c (p,\alpha,\lambda,\gamma)$  is given by
\[
\overline  c (p,\alpha,\lambda,\gamma)=c \frac{(1-\lambda)\gamma}{\alpha[(1-\lambda)\gamma-\alpha p]} [(p^\prime)^{1/q}+1]
\]
and the positive constant $c$ does not depend on $p$ and $\alpha$.
\end{proposition}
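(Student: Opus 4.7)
The plan is to derive a pointwise Hedberg--type inequality
\[
|I^\alpha f(x)|\leqslant C\,(Mf(x))^{p/q}\,\|f\|_{\mathscr L^{p,\lambda}(X,\mu)}^{\,1-p/q},
\]
and then to raise it to the $q$-th power, integrate, and invoke the boundedness of the maximal operator from Proposition \ref{prop:2.2} (in the form adapted to the modified Morrey space, cf.\ Remark \ref{rem:up_to_constant}).

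First I would split, for a free parameter $r>0$, $I^\alpha f(x)=\mathcal N_r(x)+\mathcal F_r(x)$, with $\mathcal N_r$ integrating over $B(x,r)$ and $\mathcal F_r$ over $X\setminus B(x,r)$. For the \emph{near} part, dyadic decomposition into the annuli $B(x,2^{-k}r)\setminus B(x,2^{-k-1}r)$, together with $d(x,y)^{\alpha-\gamma}\asymp (2^{-k}r)^{\alpha-\gamma}$ on the $k$-th annulus and the upper $\gamma$-Ahlfors bound $\mu B(x,2^{-k}r)\leqslant c(2^{-k}r)^\gamma$, controls $\int_{B(x,2^{-k}r)}|f|\,\dif\mu$ by $\mu B(x,2^{-k}r)\cdot Mf(x)$ and, summing the geometric series with ratio $2^{-\alpha}$, gives $|\mathcal N_r(x)|\leqslant c\,\alpha^{-1}\,r^\alpha\,Mf(x)$. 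For the \emph{far} part, the analogous decomposition into $B(x,2^{k+1}r)\setminus B(x,2^k r)$, combined with H\"older's inequality at exponent $p$, the upper $\gamma$-Ahlfors bound applied to $\mu B(x,2^{k+1}r)^{1/p'}$, and the definition of the $\mathscr L^{p,\lambda}$ norm applied to $\bigl(\int_B|f|^p\,\dif\mu\bigr)^{1/p}$, yields each term bounded by $c(2^k r)^{\alpha-\gamma(1-\lambda)/p}\|f\|_{\mathscr L^{p,\lambda}}$; since $\alpha<\gamma(1-\lambda)/p$, the series (ratio $2^{\alpha-\gamma(1-\lambda)/p}$) converges, producing
\[
|\mathcal F_r(x)|\leqslant c\,\tfrac{p}{(1-\lambda)\gamma-\alpha p}\,r^{\alpha-\gamma(1-\lambda)/p}\,\|f\|_{\mathscr L^{p,\lambda}}.
\]
Balancing the two contributions by the appropriate choice of $r$ yields the Hedberg inequality with prefactor $c\,\frac{(1-\lambda)\gamma}{\alpha[(1-\lambda)\gamma-\alpha p]}$, the exponents $p/q$ and $1-p/q$ arising from the Sobolev relation $1/p-1/q=\alpha/((1-\lambda)\gamma)$.

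Finally I would raise to the $q$-th power, integrate over an arbitrary ball $B(x_0,R)$, and divide by $R^{\gamma\lambda}$ to obtain
\[
\frac{1}{R^{\gamma\lambda}}\int_{B(x_0,R)}|I^\alpha f|^q\,\dif\mu \leqslant C\,\|f\|_{\mathscr L^{p,\lambda}}^{\,q-p}\,\|Mf\|_{\mathscr L^{p,\lambda}}^{\,p}.
\]
Applying Proposition \ref{prop:2.2} to control $\|Mf\|_{\mathscr L^{p,\lambda}}$ by $c\bigl[(p')^{1/p}+1\bigr]\|f\|_{\mathscr L^{p,\lambda}}$, taking the supremum over $(x_0,R)$, and extracting the $q$-th root converts the factor $[(p')^{1/p}+1]^{p/q}$ into the stated $(p')^{1/q}+1$, completing the proof.

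The principal obstacle is the bookkeeping of constants: one must combine the factor $\alpha^{-1}$ coming from the near-part ratio $2^{-\alpha}$, the factor $\bigl((1-\lambda)\gamma-\alpha p\bigr)^{-1}$ from the far-part ratio $2^{\alpha-\gamma(1-\lambda)/p}$, and the maximal-operator constant, and verify that the balancing step produces \emph{exactly} $\frac{(1-\lambda)\gamma}{\alpha[(1-\lambda)\gamma-\alpha p]}\bigl[(p')^{1/q}+1\bigr]$ rather than a cruder bound. A secondary subtlety is that Proposition \ref{prop:2.2} is phrased for $L^{p,\lambda}$ with weight $\mu B(x,r)^\lambda$, so one must either re-run its argument directly in $\mathscr L^{p,\lambda}$ (where only the upper $\gamma$-Ahlfors regularity intervenes, the doubling condition still giving the covering step) or pass between the two norms at the price of an absorbed constant.
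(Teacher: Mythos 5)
The paper itself offers no proof of this proposition: it is quoted verbatim from Meskhi \cite{meskhi}. So the only internal point of comparison is the paper's treatment of the direct analogues, namely Lemma \ref{lemma3} for $K_\alpha$ on nonhomogeneous spaces, which uses exactly your strategy (there the dyadic annuli are replaced by the equivalent device $d(x,y)^{\alpha-\gamma}\leqslant c\int_{d(x,y)}^{2d(x,y)}t^{\alpha-\gamma-1}\dif t$ with a cut at $t=\ve$, followed by optimization in $\ve$). Your bookkeeping is correct where it is carried out: the near- and far-part geometric series give $\frac{1}{\alpha}+\frac{p}{(1-\lambda)\gamma-\alpha p}=\frac{(1-\lambda)\gamma}{\alpha[(1-\lambda)\gamma-\alpha p]}$ exactly, the Sobolev relation converts the balancing exponents into $p/q$ and $1-p/q$, and $[(p')^{1/p}+1]^{p/q}\leqslant (p')^{1/q}+1$ by subadditivity of $t\mapsto t^{p/q}$.

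The genuine gap is the step you relegate to a ``secondary subtlety'': you need $\|Mf\|_{\mathscr L^{p,\lambda}(X,\mu)}\leqslant c\,[(p')^{1/p}+1]\,\|f\|_{\mathscr L^{p,\lambda}(X,\mu)}$, and neither of your two proposed fixes is available as stated. Passing between $L^{p,\lambda}$ and $\mathscr L^{p,\lambda}$ ``at the price of an absorbed constant'' fails: upper $\gamma$-Ahlfors regularity gives only $\mu B(x,r)^{\lambda}\leqslant c\,r^{\gamma\lambda}$, hence the one-sided embedding $L^{p,\lambda}\hookrightarrow \mathscr L^{p,\lambda}$; the reverse inequality would require a lower Ahlfors bound, which is not assumed. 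Re-running the Chiarenza--Frasca argument directly in $\mathscr L^{p,\lambda}$ is also not routine: after splitting $f=f\chi_{2B}+f\chi_{X\setminus 2B}$ for $B=B(x_0,R)$, the distant part leads via H\"older to the quantity $(\rho/R)^{\gamma\lambda}\,\mu B(x_0,R)/\mu B(y,\rho)$ over balls $B(y,\rho)$ with $\rho\gtrsim R$, and doubling together with the growth condition only reduce this to $c\,\rho^{\gamma}/\mu B(y,\rho)$ --- again a missing lower Ahlfors estimate. (The paper's own Lemma \ref{lemma2} avoids this precisely because its modified Morrey norm is normalized by $\mu B(x,ar)^{\lambda}$ rather than by a power of $r$.) The repair consistent with the shape of the constant $[(p')^{1/q}+1]$ is to avoid the Morrey boundedness of $M$ altogether: split $f$ relative to the fixed ball $B(x_0,R)$ of the Morrey norm, apply the $L^{p}\to L^{q}$ Sobolev inequality (itself proved by your Hedberg computation plus $\|M\|_{L^p\to L^p}\leqslant c\,(p')^{1/p}$, whence the exponent $(p')^{p/(pq)}=(p')^{1/q}$) to $f\chi_{2B}$, and estimate $I^{\alpha}(f\chi_{X\setminus 2B})$ on $B$ pointwise by your far-part computation, which contributes the remaining $+1$.
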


Using the norm \eqref{eq:morrey_new_norm}, we define the corresponding generalized grand Morrey space, namely
\begin{equation}\label{eq:grandgrandmorrey_mew}
\|f\|_{\mathscr L^{p),\lambda)}_{\varphi,A}(X,\mu)}:=\sup_{0<\ve<s_{\max}} \varphi(\ve)^\frac{1}{p-\ve} \|f\|_{\mathscr L^{p-\ve,\lambda-A(\ve)}(X,\mu)}
\end{equation}
where $s_{\max}=\min\{p-1,a\}$ with $a=\sup\{x>0:A(x)\leqslant \lambda \}$. If $\varphi(\ve):= \ve^\theta$, when $\theta$ is a positive number, we denote $\|f\|_{\mathscr L^{p),\lambda)}_{\varphi,A}(X,\mu)}=:\|f\|_{\mathscr L^{p),\lambda)}_{\theta,A}(X,\mu)}$.

By the Hardy-Littlewood-Sobolev inequality we know that the boundedness of the Riesz potential operator in Lebesgue spaces is valid when the exponents are different and related to each other. In the case of generalized grand Morrey spaces we will have a similar result, but now not only the exponents will be different, also the indices $\theta_1$ and $\theta_2$ will be different.

Before stating and proving the main result in this subsection (Theorem \ref{theo:boundedness_of_riesz}), we introduce some auxiliary functions those will be used afterwards.

\begin{definition}[auxiliary functions]\label{eq:auxiliary_functions}
On an interval $(0,\delta ]$, $\delta $ is small,  we define the following
functions:
\[
\bar{\phi}(x):=p+\frac{\gamma(x-q)(1-\lambda +A_{2}(x))}{\gamma(1-\lambda
+A_{2}(x))-\alpha (x-q)},\; \tilde{\phi}(x):=q-\frac{\gamma(p-x)(1-\lambda
+A_{1}(x))}{\gamma(1-\lambda +A_{1}(x))-\alpha (p-x)}
\]

\[
\bar{A}(x)=1-\frac{\alpha(x-q)}{\gamma(1-\lambda +A_{2}(x))}, \; \tilde{A}(x)=\frac{%
1-\lambda +A_{1}(\eta )}{\gamma(1-\lambda +A_{1}(\eta ))-(p-\eta )\alpha }
\]

\[
\phi (x):= \bar{\phi}(x)^{\bar{A}(x)},\quad 
\Phi (x):= \tilde{\phi}(x)^{\tilde{A}(x)}
\]

\[
\psi (\varepsilon )=\phi (\varepsilon ^{\theta _{1}}), \quad 
\text{ }\Psi (\varepsilon )=\Phi (\varepsilon ^{\theta _{1}}), 
\]
for $\theta_{1}>0$.
\end{definition}

\begin{theorem}\label{theo:boundedness_of_riesz}
Let $1<p<\infty,$ $0<\alpha <((1-\lambda)\gamma)/p,$ $0<\lambda <1,$  $1/p-1/q=\alpha/((1-\lambda)\gamma).$ Suppose that $\theta _{1}>0$ and that $\theta _{2}\geqslant\theta
_{1}[1+\alpha q/((1-\lambda)\gamma )].$ Let $A_{1}$ and $A_{2}$ be continuous
non-negative functions on $(0,p-1]$ and $(0,q-1]$ respectively satisfying
the conditions:
\begin{itemize}
\item[(i)] $A_{2}\in C^{1}((0,\delta ])$ for some positive $\delta >0;$
\item[(ii)] $\lim_{x\rightarrow 0+}A_{2}(x)=0;$
\item[(iii)] $0\leqslant B:=\lim_{x\rightarrow 0+} \frac{d A_{2}}{dx}(x)<\frac{%
(1-\lambda )^{2}}{\alpha q^{2}};$ 
\item[(iv)] $A_{1}(\eta )=A_{2}(\bar{\phi}^{-1}(\eta )),$ where $\bar{\phi}%
^{-1}$ is the inverse of $\bar{\phi}$ on $(0,\delta ]$ for some $\delta >0.$
\end{itemize}

Then the Riesz potential operator $I^{\alpha }$ is $\left(\mathscr L_{\theta _{1},A_{1}}^{p),\lambda)}(X,\mu) - \mathscr L_{\theta _{2},A_{2}}^{q),\lambda)}(X,\mu)\right)$-bounded.
\end{theorem}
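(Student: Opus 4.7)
The plan is to combine the Reduction Lemma \ref{main} (in its $\mathscr L$-variant indicated in Remark \ref{rem:up_to_constant}) with the classical Morrey boundedness of $I^{\alpha}$ from Proposition \ref{lem:meskhi}. The distinctive feature here, relative to the maximal-operator and Calder\'on--Zygmund cases treated earlier, is that the Hardy--Littlewood--Sobolev relation forces the target grand parameter to differ from the domain one. Applying Proposition \ref{lem:meskhi} at the shifted pair $(p-\varepsilon,\lambda-A_{1}(\varepsilon))$ yields
\[
I^{\alpha}\colon \mathscr L^{p-\varepsilon,\lambda-A_{1}(\varepsilon)}\to \mathscr L^{q(\varepsilon),\lambda-A_{1}(\varepsilon)},\qquad \frac{1}{p-\varepsilon}-\frac{1}{q(\varepsilon)}=\frac{\alpha}{\gamma(1-\lambda+A_{1}(\varepsilon))}.
\]
A direct computation shows $q(\varepsilon)=q-\tilde\phi(\varepsilon)$ and that $\bar\phi\circ\tilde\phi=\mathrm{id}$ near $0$; coupled with hypothesis (iv) this gives $A_{1}(\varepsilon)=A_{2}(\tilde\phi(\varepsilon))$, so the target Morrey space rewrites as $\mathscr L^{q-\eta,\lambda-A_{2}(\eta)}$ with $\eta=\tilde\phi(\varepsilon)$, which is exactly the form demanded by $\mathscr L^{q),\lambda)}_{\theta_{2},A_{2}}$.

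Fixing a small $\sigma\in(0,\delta]$, I would then use the $\mathscr L$-analogue of Lemma \ref{lem:dominance1} to reduce the supremum defining $\|I^{\alpha}f\|_{\mathscr L^{q),\lambda)}_{\theta_{2},A_{2}}}$ to the range $0<\eta\leqslant\tilde\phi(\sigma)$, change variables $\eta=\tilde\phi(\varepsilon)$ on this range, insert the Morrey estimate, and separate the $f$-free and $f$-dependent suprema to obtain
\[
\|I^{\alpha}f\|_{\mathscr L^{q),\lambda)}_{\theta_{2},A_{2}}(X,\mu)}\leqslant K\cdot\|f\|_{\mathscr L^{p),\lambda)}_{\theta_{1},A_{1}}(X,\mu)},\qquad K:=\sup_{0<\varepsilon\leqslant\sigma}\frac{\tilde\phi(\varepsilon)^{\theta_{2}/(q-\tilde\phi(\varepsilon))}\,\overline c(p-\varepsilon,\alpha,\lambda-A_{1}(\varepsilon),\gamma)}{\varepsilon^{\theta_{1}/(p-\varepsilon)}}.
\]
Uniform boundedness of $\overline c(\cdot)$ on $(0,\sigma]$ is immediate from Proposition \ref{lem:meskhi} together with the strict inequality $\alpha p<\gamma(1-\lambda)$, which guarantees a positive lower bound on the denominator $\gamma(1-\lambda+A_{1}(\varepsilon))-\alpha(p-\varepsilon)$ for $\varepsilon$ near $0$. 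This plays the role of the finiteness condition \eqref{reduction_condition}.

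The crux is therefore the finiteness of $K$, which is the present analogue of condition \eqref{eq:finite_ratio}. Since $\tilde\phi(0)=0$, I would Taylor-expand at the origin. Implicitly differentiating the defining formula for $\tilde\phi$ together with the identity $A_{1}(\varepsilon)=A_{2}(\tilde\phi(\varepsilon))$, and using (i)--(ii), one arrives at a linear equation for $\tilde\phi'(0)$ whose solution is
\[
\tilde\phi'(0)=\frac{(1-\lambda)\bigl[\gamma(1-\lambda)+\alpha q\bigr]}{\gamma(1-\lambda)^{2}-\alpha p\bigl(1-\lambda+Bq\bigr)}.
\]
Hypothesis (iii), $B<(1-\lambda)^{2}/(\alpha q^{2})$, is exactly the condition making the denominator positive, so $\tilde\phi'(0)\in(0,\infty)$ and $\tilde\phi(\varepsilon)=\tilde\phi'(0)\,\varepsilon+o(\varepsilon)$ defines a legitimate change of variables on $(0,\sigma]$. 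Consequently
\[
\frac{\tilde\phi(\varepsilon)^{\theta_{2}/(q-\tilde\phi(\varepsilon))}}{\varepsilon^{\theta_{1}/(p-\varepsilon)}}\sim c\,\varepsilon^{\theta_{2}/q-\theta_{1}/p}\quad (\varepsilon\to 0^{+}),
\]
which is bounded iff $\theta_{2}/q\geqslant\theta_{1}/p$. Rewriting this by means of the Sobolev balance $1/p-1/q=\alpha/((1-\lambda)\gamma)$ gives exactly $\theta_{2}\geqslant\theta_{1}[1+\alpha q/((1-\lambda)\gamma)]$, the standing hypothesis on the grand indices.

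The main obstacle I anticipate is the implicit-function analysis of $\tilde\phi$ at the origin: obtaining $\tilde\phi'(0)$ correctly and, crucially, recognising hypothesis (iii) as precisely what keeps $\tilde\phi$ a genuine diffeomorphism of a neighbourhood of $0$ so that the change of variable $\eta=\tilde\phi(\varepsilon)$ is admissible. A secondary technical point is the clean pasting between the dominance lemma (expressed in the target variable $\eta$) and the Morrey estimate (expressed in the domain variable $\varepsilon$), for which the compatibility relation $A_{1}=A_{2}\circ\bar\phi^{-1}$ from (iv) must be used at exactly the right place.
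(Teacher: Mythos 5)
Your proposal is correct and is essentially the paper's own argument: both restrict to small $\varepsilon$ by the dominance/splitting device, apply Proposition \ref{lem:meskhi} with the $\varepsilon$-shifted exponents linked through the Sobolev relation and hypothesis (iv), and control the resulting constant via the first-order behaviour of $\bar\phi$ at the origin, where hypothesis (iii) enters. The only difference is notational: you parametrize by the source index and use $\tilde\phi=\bar\phi^{-1}$ (your value of $\tilde\phi'(0)$ is the reciprocal of the paper's $\bar\phi'(0)$), whereas the paper parametrizes by the target index, sets $\eta=\bar\phi(\varepsilon)$, and routes the comparison $\theta_2/q\geqslant\theta_1/p$ through the auxiliary weight $\psi(\varepsilon)=\phi(\varepsilon^{\theta_1})\sim\varepsilon^{\theta_2}$.
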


\begin{proof}
We note that it is enough to prove the theorem for $\theta _{2}=\theta _{1}(1+\frac{%
\alpha q}{1-\lambda })$ because $\varepsilon ^{\theta _{2}}\leqslant \varepsilon
^{\theta _{1}(1+\frac{\alpha q}{1-\lambda })}$ for $\theta _{2}>\theta
_{1}[1+(\alpha q)/(1-\lambda)]$ and small $\ve$. We also note that, by L'Hospital rule,  $\bar{\phi}(x)\sim x$ as $\ x\rightarrow 0+$ since $B<(1-\lambda )^2/(\alpha q^2)$. Moreover, $\bar{\phi}$ is invertible near $0$, since $\frac{d \bar{\phi}}{dx}(x)>0.$ Under the conditions of the Theorem \ref{theo:boundedness_of_riesz} the function $A_{1}$ is continuous on $(0,\delta ]$ and $\lim_{x\rightarrow0+}A_{1} (x)=0.$ With all of the previous remarks taken into account, it is enough to prove the boundedness of $I^\alpha$ from $\mathscr L_{\theta _{1},A_{1}}^{p),\lambda)}(X,\mu)$
to $\mathscr L_{\psi, A_{2}}^{q),\lambda)}(X,\mu)$ since $\phi(x) \sim  x^{1+\frac{\alpha q}{%
(1-\lambda)\gamma }}$, and consequently, $\psi (x)=\phi (x^{\theta _{1}})\sim
x^{\theta _{1}\left(1+\frac{\alpha q}{(1-\lambda)\gamma }\right)}$ as $x\rightarrow 0. $
We will now proceed in a similar fashion as in the proof of Lemma \ref{lem:dominance}.

The case $\sigma<\varepsilon\leqslant s_{\max}$, where $s_{\max}$ is  from \eqref{eq:grandgrandmorrey_mew}. Letting
\[
I:=\psi^{\frac{1}{q-\varepsilon}}(\varepsilon)\left(\frac{1}{\mu B(x,r)^{\lambda-A_2(\varepsilon)}} \int_{B(x,r)} |I^\alpha f(y)|^{q-\varepsilon} \dif \mu(y) \right)^\frac{1}{q-\varepsilon}
\]
we have
\[
\begin{split}
I&\leqslant \psi^{\frac{1}{q-\varepsilon}}(\varepsilon) \mu B(x,r)^\frac{A_2(\varepsilon)+1-\lambda}{q-\varepsilon}\left( \fint_{B(x,r)} |I^\alpha f(y)|^{q-\sigma} \dif \mu(y) \right)^\frac{1}{q-\sigma}\\
&\leqslant C \psi^{\frac{1}{q-\varepsilon}}(\varepsilon) \mu B(x,r)^\frac{A_2(\sigma)+1-\lambda}{q-\sigma}\left( \fint_{B(x,r)} |I^\alpha f(y)|^{q-\sigma} \dif \mu(y) \right)^\frac{1}{q-\sigma}\\
&\leqslant C \left (\sup_{\sigma\leqslant \varepsilon\leqslant s_{\max}}\psi^{\frac{1}{q-\varepsilon}}(\varepsilon)\right ) \psi^\frac{1}{\sigma-q}(\sigma) \times \\
& \hspace{2cm} \sup_{0<\varepsilon\leqslant \sigma} \sup_{\stackrel{x\in X}{r>0}} \left(\frac{\psi(\varepsilon)}{\mu B(x,r)^{\lambda-A_2(\varepsilon)}} \int_{B(x,r)} |I^\alpha f(y)|^{q-\varepsilon} \dif \mu(y) \right)^\frac{1}{q-\varepsilon}
\end{split}
\]
where the first inequality comes from H\"older's inequality and the second one is due to the fact that $A_2$ is bounded on $[\sigma,q-1)$ and $x\mapsto (1-\lambda)/(q-x)$ is an increasing function. Hence, it is enough to consider the case $0<\varepsilon\leqslant \sigma $.\par
The case $0<\varepsilon\leqslant \sigma$. Let $\eta$ and $\varepsilon$ be chosen so that 
\begin{equation}\label{eq:etavar}
\frac{1}{p-\eta}-\frac{1}{q-\varepsilon}=\frac{\alpha}{(1-\lambda+A_2(\varepsilon))\gamma}.
\end{equation}
Obviously we have that $\varepsilon \to 0$ if and only if  $\eta \to 0$ and solving $\eta$ with  respect to $\varepsilon$ in \eqref{eq:etavar} we obtain  
\[
\eta=p-\frac{\gamma(q-\varepsilon)(1-\lambda+A_2(\varepsilon))}{\gamma(1-\lambda+A_2(\varepsilon))-\alpha(\varepsilon-q)}= \bar \phi(\varepsilon).
\]
Letting 
\[
J:=\psi^{\frac{1}{q-\varepsilon}}(\varepsilon)\left(\frac{1}{\mu B(x,r)^{\lambda-A_2(\varepsilon)}} \int_{B(x,r)} |I^\alpha f(y)|^{q-\varepsilon} \dif \mu(y) \right)^\frac{1}{q-\varepsilon}
\]
we have
\[
\begin{split}
J&\leqslant C \frac{(1-\lambda+A_2(\varepsilon))\gamma}{\alpha[(1-\lambda+A_2(\varepsilon))\gamma-\alpha(p-\eta)]}[(p-\eta)^\prime]^\frac{1}{q-\varepsilon}\psi^\frac{1}{q-\varepsilon}(\varepsilon) \times\\
&\hspace{2cm}\sup_{\stackrel{x\in X}{r>0}}\left( \frac{1}{\mu B(x,r)^{\lambda-A_2(\varepsilon)}} \int_{B(x,r)} |f(y)|^{p-\eta} \dif \mu(y) \right)^\frac{1}{p-\eta} \\
&\leqslant C \frac{(1-\lambda+A_2(\varepsilon))\gamma}{\alpha[(1-\lambda+A_2(\varepsilon))\gamma-\alpha(p-\eta)]}\eta^\frac{\theta_1}{\eta-p}\psi^\frac{1}{q-\varepsilon}(\varepsilon) \times\\
&\hspace{2cm}\sup_{\stackrel{x\in X}{r>0}}\left( \frac{\eta^{\theta_1}}{\mu B(x,r)^{\lambda-A_2(\varepsilon)}} \int_{B(x,r)} |f(x)|^{p-\eta} \dif \mu(x) \right)^\frac{1}{p-\eta} \\
&\leqslant C \|f\|_{\mathscr L^{p),\lambda)}_{\theta_1,A_1}(X,\mu)}
\end{split}
\]
where the first inequality is due to Proposition \ref{lem:meskhi} and the last one is due to the fact that $\eta=\bar \phi(\varepsilon)$. Since the constant in the last inequality is uniformly bounded with respect  to  $\varepsilon$  we obtain the desired boundedness of the Riesz potential operator.
\end{proof}

\begin{corollary}
Let $1<p<\infty,$ $0<\alpha <((1-\lambda)\gamma)/p,$ $0<\lambda <1,$  $1/p-1/q=\alpha/((1-\lambda)\gamma).$ Suppose that $\theta _{1}>0$ and that $\theta _{2}\geqslant\theta
_{1}(1+\alpha q/(1-\lambda ))$.
 Let $A_2(x)=\alpha x$, where $\alpha$ is a non-negative constant satisfying the condition $\alpha<(1-\lambda)^2/(\alpha q^2)$. Let $A_1(x)=\alpha \bar \phi^{-1}(x)$. Then $I^\alpha$ is  $\left(\mathscr L_{\theta _{1},A_{1}}^{p),\lambda)}(X,\mu) - \mathscr L_{\theta _{2},A_{2}}^{q),\lambda)}(X,\mu)\right)$-bounded.
\end{corollary}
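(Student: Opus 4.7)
The plan is simply to verify that the specific choice of $A_1$ and $A_2$ satisfies the four hypotheses of Theorem \ref{theo:boundedness_of_riesz}, and then invoke that theorem directly. No new analytic work is required; the whole content of the corollary is that the abstract conditions (i)--(iv) can be met by linear $A_2$.

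First I would check (i) and (ii): since $A_2(x)=\alpha x$ is polynomial (in particular $C^\infty$) on any interval $(0,\delta]$, membership in $C^1((0,\delta])$ is immediate, and $\lim_{x\to 0+} A_2(x)=0$ is obvious. For (iii), I compute $\tfrac{d A_2}{dx}(x)=\alpha$ identically, so that
\[
B:=\lim_{x\to 0+}\frac{d A_2}{dx}(x)=\alpha,
\]
and the standing hypothesis on the constant in the statement, namely $\alpha<(1-\lambda)^2/(\alpha q^2)$, is precisely the condition $0\leqslant B<(1-\lambda)^2/(\alpha q^2)$ required by Theorem \ref{theo:boundedness_of_riesz}(iii). (Here one has to note the slight abuse of notation in the corollary, where the symbol $\alpha$ is used both for the order of the Riesz potential and for the slope of $A_2$; the inequality is nontrivial only because these two $\alpha$'s are coupled.)

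For (iv), one needs $\bar\phi$ to be invertible near $0$, which is part of the conclusion of Theorem \ref{theo:boundedness_of_riesz} (guaranteed by $B<(1-\lambda)^2/(\alpha q^2)$, as shown in its proof via L'Hospital), and then the definition $A_1(x):=\alpha\,\bar\phi^{-1}(x)$ yields exactly
\[
A_1(\eta)=\alpha\,\bar\phi^{-1}(\eta)=A_2(\bar\phi^{-1}(\eta)),
\]
which is the identity demanded by (iv). Continuity of $A_1$ on $(0,\delta]$ and $\lim_{x\to 0+}A_1(x)=0$ are inherited from continuity of $\bar\phi^{-1}$ at $0$.

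Having verified all the assumptions of Theorem \ref{theo:boundedness_of_riesz}, the $\left(\mathscr L_{\theta_1,A_1}^{p),\lambda)}(X,\mu)-\mathscr L_{\theta_2,A_2}^{q),\lambda)}(X,\mu)\right)$-boundedness of $I^\alpha$ follows at once. The only step that requires any thought is (iii), and even there the work is already hidden inside the hypothesis $\alpha<(1-\lambda)^2/(\alpha q^2)$; there is no genuine obstacle, the corollary being just a concrete specialization of the theorem.
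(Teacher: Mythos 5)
Your proposal is correct and matches the paper's intent exactly: the paper gives no separate proof of this corollary because it is precisely the specialization of Theorem \ref{theo:boundedness_of_riesz} to $A_2(x)=\alpha x$, and your verification of conditions (i)--(iv) (including the observation that the slope condition is exactly hypothesis (iii) and that $A_1=\alpha\bar\phi^{-1}$ is exactly hypothesis (iv)) is the whole content. The only point worth flagging is the paper's own inconsistency between the corollary's hypothesis $\theta_2\geqslant\theta_1(1+\alpha q/(1-\lambda))$ and the theorem's $\theta_2\geqslant\theta_1[1+\alpha q/((1-\lambda)\gamma)]$, which is harmless when $\gamma\geqslant 1$ and is a defect of the statement rather than of your argument.
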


It is also possible to prove a similar result of Theorem \ref{theo:boundedness_of_riesz}, but now requiring conditions on the function $A_1$, namely we have

\begin{theorem}
Let\textbf{\ }$1<p<\infty,$ $0<\alpha <1,$ $0<\lambda <1-\alpha p,$  $\frac{1}{p}-\frac{1}{q}=\frac{\alpha }{1-\lambda }.$ Suppose that $\theta _{1}>0$ and that $\theta _{2}>\theta
_{1}(1+\frac{\alpha q}{1-\lambda }).$ Let $A_{1}$ and $A_{2}$ be continuous
non-negative functions on $(0,p-1]$ and $(0,q-1]$ respectively satisfying
the conditions:
\begin{itemize}
\item[(i)] $A_{1}\in C^{1}((0,\delta ])$ for some positive $\delta >0;$
\item[(ii)] $\lim_{x\rightarrow 0+}A_{1}(x)=0;$
\item[(iii)] $ B_1:=\lim_{x\rightarrow 0+} \frac{d A_{1}}{dx}(x) \geqslant 0;$
\item[(iv)] $A_{2}(x )=A_{1}(\tilde{\phi}^{-1}(x ))$  on $(0,\delta ]$ for some $\delta >0.$
\end{itemize}

Then the Riesz potential operator $I^{\alpha }$ is $\left(\mathscr L_{\theta _{1},A_{1}}^{p),\lambda)}(X,\mu) - \mathscr L_{\theta _{2},A_{2}}^{q),\lambda)}(X,\mu)\right)$-bounded.

\end{theorem}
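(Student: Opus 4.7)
The plan is to mirror the proof of Theorem \ref{theo:boundedness_of_riesz} with the roles of $(\bar\phi, A_2)$ and $(\tilde\phi, A_1)$ exchanged. First I would reduce to the borderline case $\theta_2 = \theta_1(1+\alpha q/(1-\lambda))$ via the trivial bound $\varepsilon^{\theta_2}\leqslant\varepsilon^{\theta_1(1+\alpha q/(1-\lambda))}$ for small $\varepsilon$. Then I would verify, by applying L'Hospital's rule to the explicit formula defining $\tilde\phi$, that under hypotheses (i)--(iii) one has $\tilde\phi(x)\sim x$ as $x\to 0+$ and $\tilde\phi$ is invertible on some $(0,\delta]$; a direct computation gives $\tilde\phi'(0)$ proportional to $\gamma^{2}(1-\lambda)^{2}+\gamma\alpha p^{2}B_{1}$, which is strictly positive for any $B_1\geqslant 0$ under the standing assumption $\alpha p<(1-\lambda)\gamma$. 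Consequently the function $A_{2}$ defined by (iv) is continuous, non-negative, and vanishes at $0$, so the target space $\mathscr L^{q),\lambda)}_{\theta_{2},A_{2}}(X,\mu)$ is meaningful.

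Second, I would split the supremum defining the target grand norm into the two ranges $\sigma<\varepsilon\leqslant s_{\max}$ and $0<\varepsilon\leqslant\sigma$, exactly as in Theorem \ref{theo:boundedness_of_riesz}. The outer range is absorbed by a Hölder-type interpolation, using that $A_{2}$ stays bounded away from $\lambda$ on $[\sigma,q-1)$ and that $x\mapsto(1-\lambda)/(q-x)$ is monotone, into the inner range up to a finite factor depending on $\sigma$. For the inner range, I would parametrize by $\varepsilon=\tilde\phi(\eta)$ via the Sobolev-type relation
\[
\frac{1}{p-\eta}-\frac{1}{q-\varepsilon}=\frac{\alpha}{(1-\lambda+A_{1}(\eta))\gamma},
\]
which by condition (iv) makes $\lambda-A_{1}(\eta)=\lambda-A_{2}(\varepsilon)$ the common Morrey level, so Proposition \ref{lem:meskhi} applies at each fixed $\varepsilon$ with an explicit Morrey constant.

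Third, I would assemble the estimate. Proposition \ref{lem:meskhi} yields a factor
\[
\frac{(1-\lambda+A_{1}(\eta))\gamma}{\alpha\bigl[(1-\lambda+A_{1}(\eta))\gamma-\alpha(p-\eta)\bigr]}\bigl[(p-\eta)'\bigr]^{1/(q-\varepsilon)}
\]
multiplied by the $\mathscr L^{p-\eta,\lambda-A_{1}(\eta)}$-Morrey norm of $f$; restoring the grand factors $\eta^{\theta_{1}/(p-\eta)}$ on the source side and $\varepsilon^{\theta_{2}/(q-\varepsilon)}$ on the target side converts these quantities into the two grand Morrey norms. Uniform boundedness in $\varepsilon$ (equivalently in $\eta$, via $\tilde\phi(\eta)\sim\eta$) of the resulting product then yields the claim, and this is precisely where the chosen asymptotic for $\theta_{2}$ relative to $\theta_{1}$ is consumed.

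The main obstacle I expect is the asymptotic bookkeeping in this last step. In Theorem \ref{theo:boundedness_of_riesz} the free variable was $\varepsilon$ and $A_{2}$ was smooth by hypothesis, whereas here $\eta$ is the natural free variable inside Proposition \ref{lem:meskhi} while the grand-norm supremum is indexed by $\varepsilon$, so smoothness and vanishing at $0$ of $A_{2}$ have to be transferred implicitly from $A_{1}$ through $\tilde\phi^{-1}$. Showing that the three small-$\varepsilon$ blow-ups (the Morrey constant, the conjugate-exponent factor $(p-\eta)'$, and the $\theta$-mismatch between $\eta^{\theta_{1}/(p-\eta)}$ and $\varepsilon^{\theta_{2}/(q-\varepsilon)}$) combine to a uniformly bounded quantity is the most delicate part of the argument.
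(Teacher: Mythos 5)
Your proposal is correct and follows essentially the same route as the paper, whose own proof is just the remark that one argues as in Theorem \ref{theo:boundedness_of_riesz} with the reduction now performed on the source side (via the auxiliary weight $\Psi$) rather than on the target side; your inline absorption of the $\theta$-mismatch is the same computation in different packaging. Your observation that $\tilde{\phi}'(0)$ is proportional to $\gamma^{2}(1-\lambda)^{2}+\gamma\alpha p^{2}B_{1}$, hence positive for every $B_{1}\geqslant 0$, correctly explains why no upper bound on $B_{1}$ is needed here, unlike condition (iii) of Theorem \ref{theo:boundedness_of_riesz}.
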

\begin{proof}
The proof is similar to that of Theorem \ref{theo:boundedness_of_riesz}. In this case it is enough to prove that  $I^\alpha$ is  $\left(\mathscr L_{\Psi,A_{1}}^{p),\lambda)}(X,\mu) - \mathscr L_{\theta _{2},A_{2}}^{q),\lambda)}(X,\mu)\right)$-bounded, where $\theta_2=1+\frac{\alpha q}{1-\lambda}=\frac{1-\lambda}{1-\lambda-\alpha p}$.
\end{proof}

\subsection{Riesz potential operator defined via measure}By a \textit{Riesz type potential operator defined via measure}, we mean an operator of the type
\begin{equation}\label{eq:riesz_measure}
I^\alpha_\mu f(x):=\int_X \frac{f(y)}{\mu(x,d(x,y))^{1-\alpha}} \dif \mu(y)
\end{equation}
where $0<\alpha<1$. 

The following  proposition was proved in A. Meskhi \cite{meskhi}.

\begin{proposition}\label{lem:meskhi_measure}
Let $1<p<\infty$, $0<\alpha < \frac{1-\lambda}{p}$, $\frac{1}{p}-\frac{1}{q}=\frac{\alpha}{1-\lambda}$, where $0\leqslant \lambda <1$. Then the inequality
\begin{equation}\label{equ:meskhi}
\|I^{\alpha}_\mu f\|_{ L^{q,\lambda}(X,\mu)} \leqslant c(p,\alpha,\lambda) \|f\|_{L^{p,\lambda}(X,\mu)}
\end{equation}
holds, where
\[
c(p,\alpha,\lambda)=b_0 \left (C_\alpha+\frac{p}{1-\lambda-\alpha p} \right) [(p^\prime)^{1/q}+1]
\]
and the positive constant $b_0$ does not depend on $p$ and $\alpha$.
\end{proposition}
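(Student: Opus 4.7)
The plan is to derive a pointwise Hedberg-type interpolation inequality of the form
$I_\mu^\alpha f(x)\leq b_0\bigl(C_\alpha+\tfrac{p}{1-\lambda-\alpha p}\bigr) Mf(x)^{p/q}\,\|f\|_{L^{p,\lambda}(X,\mu)}^{1-p/q}$,
and then take the $L^{q,\lambda}$ norm, invoking Proposition \ref{prop:2.2} for $M$. Fix $x\in X$ and a scale $t>0$, split $I_\mu^\alpha f(x)=J_1(x,t)+J_2(x,t)$ into the pieces over $B(x,t)$ and $X\setminus B(x,t)$, and treat the two pieces by dyadic decompositions.

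For the near piece $J_1$, I would decompose over the shells $S_k=\{2^{-k-1}t\leq d(x,y)<2^{-k}t\}$ for $k\geqslant 0$. On $S_k$ the denominator is bounded below by $\mu B(x,2^{-k-1}t)^{1-\alpha}$, while $\int_{S_k}|f|\,\dif\mu\leqslant \mu B(x,2^{-k}t)\, Mf(x)$. By the doubling condition these two measures are comparable, so the $k$-th term is $\lesssim \mu B(x,2^{-k-1}t)^{\alpha}Mf(x)$, and the geometric sum $\sum_k \mu B(x,2^{-k-1}t)^\alpha \lesssim C_\alpha\,\mu B(x,t)^\alpha$ gives
$J_1(x,t)\leq C_\alpha\,\mu B(x,t)^\alpha Mf(x)$,
where $C_\alpha$ captures a factor $1/(1-\text{const}^{-\alpha})$ arising from reverse-doubling behaviour of $\mu$ in the SHT.

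For the far piece $J_2$, I decompose over $\{2^kt\leq d(x,y)<2^{k+1}t\}$, bound the denominator by $\mu B(x,2^kt)^{1-\alpha}$, apply H\"older's inequality, and use the Morrey control $\int_{B(x,2^{k+1}t)}|f|^p\,\dif\mu\leq \mu B(x,2^{k+1}t)^\lambda\,\|f\|_{L^{p,\lambda}}^p$. Doubling collapses the scales and, using $\alpha=(1-\lambda)(1/p-1/q)$, each shell contributes $\mu B(x,2^kt)^{\alpha-(1-\lambda)/p}\|f\|_{L^{p,\lambda}}=\mu B(x,2^kt)^{-(1-\lambda)/q}\|f\|_{L^{p,\lambda}}$. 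Because $\alpha p<1-\lambda$, the exponent is negative and the geometric series converges; writing the rate in terms of the free parameters yields
$J_2(x,t)\leq \frac{c_1 p}{1-\lambda-\alpha p}\,\mu B(x,t)^{-(1-\lambda)/q}\|f\|_{L^{p,\lambda}}$,
since $q/(1-\lambda)=p/(1-\lambda-\alpha p)$. Now I would optimize in $t$: since $\alpha+(1-\lambda)/q=(1-\lambda)/p$, choosing $t=t(x)$ so that $\mu B(x,t)^{(1-\lambda)/p}$ equals $\|f\|_{L^{p,\lambda}}/Mf(x)$ (or, if $r\mapsto \mu B(x,r)$ has jumps, an approximate realization by infimum, absorbing the discrepancy into $b_0$) balances $J_1$ and $J_2$ and delivers the claimed pointwise inequality.

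Finally, raising to power $q$, integrating over a ball $B\subset X$, dividing by $\mu B^\lambda$ and taking the supremum gives
$\|I_\mu^\alpha f\|_{L^{q,\lambda}}^q \leq b_0^q\bigl(C_\alpha+\tfrac{p}{1-\lambda-\alpha p}\bigr)^q\|f\|_{L^{p,\lambda}}^{q-p}\|Mf\|_{L^{p,\lambda}}^p.$
Invoking Proposition \ref{prop:2.2} gives $\|Mf\|_{L^{p,\lambda}}\leq c_2\bigl((p')^{1/p}+1\bigr)\|f\|_{L^{p,\lambda}}$, whose $(p/q)$-th power is controlled by $c_3\bigl((p')^{1/q}+1\bigr)$, producing exactly the advertised constant $c(p,\alpha,\lambda)=b_0\bigl(C_\alpha+\tfrac{p}{1-\lambda-\alpha p}\bigr)\bigl[(p')^{1/q}+1\bigr]$ with $b_0$ independent of $p$ and $\alpha$. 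The main obstacle is the convergence of the two dyadic series under only the doubling/upper-Ahlfors hypotheses of Section 4: the near sum demands a reverse-doubling behaviour (inherent in a nondegenerate SHT) to obtain a geometric decay producing $C_\alpha$, while the far sum requires uniform control of the exponent $-(1-\lambda)/q<0$ to yield the precise factor $p/(1-\lambda-\alpha p)$. Careful bookkeeping of the doubling constants throughout ensures $b_0$ does not depend on $p$ or $\alpha$.
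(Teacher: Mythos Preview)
The paper does not itself prove this proposition; it is quoted from \cite{meskhi} without argument. Your Hedberg-type scheme (near/far split, pointwise bound by $Mf(x)^{p/q}\|f\|_{L^{p,\lambda}}^{1-p/q}$, optimization in the scale, then Proposition~\ref{prop:2.2}) is the standard route and is exactly the method the present paper uses in the parallel Lemma~\ref{lemma3} for the nonhomogeneous operator $K_\alpha$, so your approach is in line with the paper's style and almost certainly with the proof in \cite{meskhi}.

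One technical point worth tightening: for the near piece $J_1$ you decompose over dyadic radii and then need $\sum_{k\ge 0}\mu B(x,2^{-k}t)^\alpha\lesssim \mu B(x,t)^\alpha$, which indeed requires a reverse-doubling property. You can sidestep this entirely because the kernel of $I_\mu^\alpha$ is already expressed in terms of $\mu B(x,d(x,y))$: decompose instead over the level sets
\[
E_j=\Bigl\{y\in B(x,t):\ 2^{-j-1}\mu B(x,t)<\mu B\bigl(x,d(x,y)\bigr)\leqslant 2^{-j}\mu B(x,t)\Bigr\},\quad j\geqslant 0.
\]
On $E_j$ the kernel is at most $\bigl(2^{-j-1}\mu B(x,t)\bigr)^{\alpha-1}$, while $E_j$ is contained in a ball of $\mu$-measure at most $2^{-j}\mu B(x,t)$, so $\int_{E_j}|f|\,\dif\mu\leqslant 2^{-j}\mu B(x,t)\,Mf(x)$. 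Summing gives the clean geometric series $\sum_j 2^{-j\alpha}$ and hence $J_1\leqslant \dfrac{2^{1-\alpha}}{1-2^{-\alpha}}\,\mu B(x,t)^\alpha Mf(x)$, producing the factor $C_\alpha$ with no extra hypothesis on the SHT. With this adjustment your outline and your constant bookkeeping (including $\bigl((p')^{1/p}+1\bigr)^{p/q}\leqslant (p')^{1/q}+1$ since $p/q<1$) are correct.
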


It is also possible to obtain the boundedness of the operator $I^{\alpha}_\mu$ in the framework of generalized grand Morrey spaces, namely the following statement holds:

\begin{theorem}\label{theo:boundedness_of_riesz_measure}
Let $1<p<\infty,$ $0<\alpha <(1-\lambda)/p,$ $0<\lambda <1,$  $1/p-1/q=\alpha/(1-\lambda).$ Suppose that $\theta _{1}>0$ and that $\theta _{2}\geqslant\theta
_{1}(1+\alpha q/(1-\lambda )).$ Let $A_{1}$ and $A_{2}$ be continuous
non-negative functions on $(0,p-1]$ and $(0,q-1]$ respectively satisfying
the conditions:
\begin{itemize}
\item[(i)] $A_{2}\in C^{1}((0,\delta ])$ for some positive $\delta >0;$
\item[(ii)] $\lim_{x\rightarrow 0+}A_{2}(x)=0;$
\item[(iii)] $0\leqslant B:=\lim_{x\rightarrow 0+} \frac{d A_{2}}{dx}(x)<\frac{%
(1-\lambda )^{2}}{\alpha q^{2}};$ 
\item[(iv)] $A_{1}(\eta )=A_{2}(\bar{\phi}^{-1}(\eta )),$ where $\bar{\phi}%
^{-1}$ is the inverse of $\bar{\phi}$ on $(0,\delta ]$ for some $\delta >0.$
\end{itemize}

Then the Riesz potential operator $I^{\alpha }$ is $\left( L_{\theta _{1},A_{1}}^{p),\lambda)}(X,\mu) - L_{\theta _{2},A_{2}}^{q),\lambda)}(X,\mu)\right)$-bounded.
\end{theorem}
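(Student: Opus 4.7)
The plan is to mimic the proof of Theorem \ref{theo:boundedness_of_riesz}, with Proposition \ref{lem:meskhi_measure} replacing Proposition \ref{lem:meskhi} and with the auxiliary functions adjusted to the Sobolev relation $1/p-1/q=\alpha/(1-\lambda)$, which lacks the factor $\gamma$ present in the metric-defined case. First, I would reduce to the endpoint $\theta_{2}=\theta_{1}\bigl(1+\alpha q/(1-\lambda)\bigr)$: for any larger $\theta_{2}$ the weight $\varepsilon^{\theta_{2}}$ is dominated by $\varepsilon^{\theta_{1}(1+\alpha q/(1-\lambda))}$ for small $\varepsilon$, so the general case follows from the critical one.

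Second, I would introduce, in parallel with Definition \ref{eq:auxiliary_functions}, the functions
\[
\bar{\phi}(x):=p+\frac{(x-q)(1-\lambda+A_{2}(x))}{(1-\lambda+A_{2}(x))-\alpha(x-q)},\qquad \bar{A}(x):=1-\frac{\alpha(x-q)}{1-\lambda+A_{2}(x)},
\]
and set $\phi(x):=\bar{\phi}(x)^{\bar{A}(x)}$, $\psi(\varepsilon):=\phi(\varepsilon^{\theta_{1}})$. L'Hospital's rule together with $B<(1-\lambda)^{2}/(\alpha q^{2})$ yields $\bar{\phi}(x)\sim x$ and $\bar{\phi}'(x)>0$ near $0$, so $\bar{\phi}^{-1}$ exists on some $(0,\delta]$ and $A_{1}=A_{2}\circ\bar{\phi}^{-1}$ inherits continuity and the vanishing at $0$. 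Moreover $\psi(x)\sim x^{\theta_{1}(1+\alpha q/(1-\lambda))}$ as $x\to 0$, so it suffices to prove that $I^{\alpha}_{\mu}$ is $\bigl(L^{p),\lambda)}_{\theta_{1},A_{1}}(X,\mu) - L^{q),\lambda)}_{\psi,A_{2}}(X,\mu)\bigr)$-bounded.

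Third, I would split the supremum defining the grand Morrey norm of $I^{\alpha}_{\mu}f$ at a small $\sigma>0$, precisely as in Theorem \ref{theo:boundedness_of_riesz}. The tail $\sigma<\varepsilon\leqslant s_{\max}$ is absorbed into the head by a pointwise H\"older inequality on each ball, exploiting that $A_{2}$ is bounded on $[\sigma,q-1]$ and that $x\mapsto (1-\lambda)/(q-x)$ is increasing. For the head $0<\varepsilon\leqslant\sigma$, setting $\eta:=\bar{\phi}(\varepsilon)$ produces the relation $1/(p-\eta)-1/(q-\varepsilon)=\alpha/(1-\lambda+A_{2}(\varepsilon))$ that matches the hypothesis of Proposition \ref{lem:meskhi_measure}; applying it with Morrey index $\lambda-A_{2}(\varepsilon)=\lambda-A_{1}(\eta)$ converts the $L^{q-\varepsilon,\lambda-A_{2}(\varepsilon)}$-norm of $I^{\alpha}_{\mu}f$ into the $L^{p-\eta,\lambda-A_{1}(\eta)}$-norm of $f$.

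The technical heart of the proof lies in verifying that the product of the Morrey-boundedness constant $c(p-\eta,\alpha,\lambda-A_{2}(\varepsilon))$ from Proposition \ref{lem:meskhi_measure}, the weight $\psi(\varepsilon)^{1/(q-\varepsilon)}$, and the auxiliary factor $\eta^{-\theta_{1}/(p-\eta)}$ (inserted so that $\eta^{\theta_{1}}/\mu B(x,r)^{\lambda-A_{1}(\eta)}$ reproduces the $L^{p),\lambda)}_{\theta_{1},A_{1}}$-defining weight for $f$) remains uniformly bounded as $\varepsilon\to 0$. Using the defining relation for $\eta$, the potentially singular factor $(p-\eta)/\bigl(1-\lambda+A_{2}(\varepsilon)-\alpha(p-\eta)\bigr)$ appearing in $c(p-\eta,\alpha,\lambda-A_{2}(\varepsilon))$ simplifies to $(q-\varepsilon)/(1-\lambda+A_{2}(\varepsilon))$, which is manifestly bounded. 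The remaining balance reduces to the identity $\theta_{1}\bigl(1+\alpha q/(1-\lambda)\bigr)/q=\theta_{1}/p$, which is nothing but the Sobolev relation $1/p-1/q=\alpha/(1-\lambda)$ itself. I expect this constant-tracking to be the main obstacle, but once accomplished, taking the supremum over $\varepsilon$ delivers the desired boundedness.
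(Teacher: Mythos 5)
Your proposal is correct and takes essentially the same route as the paper: the paper's own proof is a one-line reduction stating that the argument of Theorem \ref{theo:boundedness_of_riesz} carries over mutatis mutandis, with Proposition \ref{lem:meskhi_measure} in place of Proposition \ref{lem:meskhi} and the auxiliary functions of Definition \ref{eq:auxiliary_functions} taken with $\gamma=1$, which is exactly the substitution you make (and you work out the constant-tracking in more detail than the paper does).
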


\begin{proof}
The proof follows, mutatis mutandis, the proof of Theorem \ref{theo:boundedness_of_riesz}. Namely, we need to use Proposition \ref{lem:meskhi_measure} and the auxiliary functions from Definition \ref{eq:auxiliary_functions} should be used with $\gamma=1$.
\end{proof}

\begin{remark}
If $\mu$ is upper Ahlfors regular, then by using the pointwise estimate:
$$ I^{\alpha}f(x) \leq c_{\alpha} Mf(x), \;\; f\geq 0, $$
and Theorem \ref{theo:3.8} we have also the same boundedness for $I^{\alpha}$ as in Theorem \ref{theo:3.8} for $M$. 
\end{remark}

\section{Potentials on nonhomogeneous spaces}
In this section we will deal with potential operators in the framework of  nonhomogeneous spaces. Namely, 
let $(X, d, \mu)$ be a topological space with a complete measure $\mu$ such that the space of compactly supported functions are dense in $L^1(X, \mu)$ and $d$ is a quasimetric satisfying the standard conditions, see subsection \ref{preliminaries}. As before we will assume that $d_X\equiv \mathrm{diam} (X)<\infty$. In this section,  we do \textsc{not} assume that $\mu$ is doubling! 

Let 
\[
(K_\alpha f)(x)=\int_X \frac{f(y)}{d(x,y)^{1-\alpha}}\dif \mu(y), 
\]
where $0<\alpha<1$.

We need the following {\it modified} maximal operator on $X$
\[
(\widetilde M f)(x)=\sup_{r>0} \frac{1}{\mu B(x_0,N_0r)}\int_{B(x,r)} |f(y)| \dif \mu(y),
\]
where $N_0=C_t(1+2C_s)$ and the constants $C_s$ and $C_t$ are from the definition of quasimetric $d$. Let $b$ be a constant. We will use the symbol $bB$ for a ball $B(x,br)$, where $B\equiv B(x,r)$.

\begin{lemma}\label{lemma1}
Let $1<p<\infty$. Then the following inequality holds for all $f \in L^p(X, \mu)$
\begin{equation}\label{equacao1}
\|\widetilde M f\|_{L^p(X, \mu)}\leqslant 2 (p^\prime)^\frac{1}{p}\| f\|_{L^p(X, \mu)}
\end{equation}
\end{lemma}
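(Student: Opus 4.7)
The plan is to derive the $L^p$ bound by Marcinkiewicz interpolation between a weak-type $(1,1)$ estimate with constant $1$ and the strong-type $(\infty,\infty)$ estimate with constant $1$. With these sharp endpoint norms the standard Marcinkiewicz theorem produces precisely the factor $2(p')^{1/p}$ that appears in \eqref{equacao1}. The strong endpoint at $\infty$ is immediate: since $N_0=C_t(1+2C_s)\geqslant 1$, the inclusion $B(x,r)\subset B(x,N_0 r)$ yields $\widetilde M f(x)\leqslant \|f\|_{L^\infty(X,\mu)}$ for $\mu$-almost every $x$.

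The core of the argument is therefore the weak-type $(1,1)$ inequality $\mu\{\widetilde M f>\lambda\}\leqslant \lambda^{-1}\|f\|_{L^1(X,\mu)}$. For each $x$ in the level set $E_\lambda$ one selects $r_x>0$ with $\int_{B(x,r_x)}|f|\dif\mu > \lambda\,\mu B(x,N_0 r_x)$. After a routine truncation of the radii (justified by $\mu B(x,r)<\infty$ and $d_X<\infty$), a Vitali-type selection extracts a countable pairwise disjoint subfamily $\{B_j\}=\{B(x_j,r_{x_j})\}$ such that $E_\lambda\subset\bigcup_j B(x_j,N_0 r_{x_j})$. The specific value $N_0=C_t(1+2C_s)$ is exactly the quasimetric enlargement factor needed for this selection: using the quasitriangle inequality with constant $C_t$ and the symmetry defect $C_s$, one verifies that whenever two balls of radii $r_2\leqslant r_1$ intersect, the smaller is contained in the $N_0$-dilate of the larger. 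Disjointness of the $B_j$ then gives
\[
\mu E_\lambda \leqslant \sum_j \mu B(x_j,N_0 r_{x_j}) < \frac{1}{\lambda}\sum_j \int_{B_j}|f|\dif\mu \leqslant \frac{1}{\lambda}\|f\|_{L^1(X,\mu)}.
\]

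Finally I would feed both endpoint estimates into Marcinkiewicz interpolation in the standard way: splitting $f$ at height $\alpha$, bounding the large part via weak $(1,1)$ and the small part via $L^\infty$, and integrating the resulting distributional inequality against $p\alpha^{p-1}\dif\alpha$, which produces exactly $\|\widetilde M f\|_{L^p(X,\mu)}\leqslant 2(p')^{1/p}\|f\|_{L^p(X,\mu)}$. The main obstacle is the Vitali-type covering step, because $\mu$ is \emph{not} assumed doubling here and one cannot invoke the standard covering lemmas for spaces of homogeneous type. The role of the modified operator $\widetilde M$ and of the constant $N_0$ is precisely to absorb this difficulty, allowing a disjoint selection to be carried out purely from the quasimetric structure and the finiteness of $\mu$ on balls.
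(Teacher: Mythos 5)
Your proposal is correct and follows essentially the same route as the paper: the paper also deduces \eqref{equacao1} by Marcinkiewicz interpolation between the weak $(1,1)$ bound with constant $1$ (which it simply cites from \cite[p.~368]{EdKoMe} rather than reproving via the Vitali-type selection you sketch) and the trivial $L^\infty$ bound, obtaining the constant $2(p')^{1/p}$ exactly as you describe.
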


\begin{proof}
The operator $\widetilde M$ is of  weak type $(1,1)$  with constant 1, i.e. the inequality
\[
\mu\left(\left\{x \in X: (\widetilde Mf )(x)>\lambda\right\} \right)\leqslant \frac{1}{\lambda} \int_X |f(x)|\dif \mu(x) 
\]
holds, see \cite[p. 368]{EdKoMe}. Since $\widetilde M$ is of strong type  $(\infty,\infty)$ with constant 1, i.e. $\|\widetilde M f\|_{L^\infty}\leqslant \|f\|_{L^\infty}$, we conclude that the inequality \eqref{equacao1} holds with constant $2(p^\prime)^\frac{1}{p}$ (see \cite[p. 29]{duoandikoetxea}).
\end{proof}

\subsection{Modified Morrey space}
We will define a modified Morrey space
\begin{definition}[Modified Morrey space]
Let $1<p<\infty$ and let $0\leqslant \lambda <1$. Suppose that $a$ is a positive constant. We denote by $L^{p,\lambda}(X,\mu)_a$ the modified Morrey space defined by the norm
\[
\|f\|_{L^{p,\lambda}(X,\mu)_a}=\sup_{x \in X, r>0} \left (\frac{1}{\mu B(x,ar)^\lambda} \int_{B(x,r)} |f(y)| \dif \mu(y) \right)
\]
\end{definition}

For the next statement we refer to \cite{KMCVEE}, but we give the proof for completeness, because we will need the constant in the inequality.

\begin{lemma}\label{lemma2}
Let $1<p<\infty$ and let $0\leqslant \lambda <1$. Then the following inequality
\begin{equation}\label{equacaolemma2}
\|\widetilde M f\|_{L^{p,\lambda}(X, \mu)_{N_0 \overline{a}}}\leqslant \left[1+2 (p^\prime)^\frac{1}{p}\right]\| f\|_{L^{p,\lambda}(X, \mu)_{N_0}}
\end{equation}
holds, where $N_0$ and $\overline a$ are positive constants defined by $N_0=C_t(1+2C_s)$, $\overline a=C_t(C_t(C_s+1)+1)$.
\end{lemma}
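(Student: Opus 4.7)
The plan is to fix an arbitrary ball $B_0 = B(x_0, r)$ and decompose $f = f_1 + f_2$ with $f_1 := f \chi_{B(x_0, \overline a r)}$, interpreting the modified Morrey norm with the usual $L^p$-averaging so that $|f|^p$ appears under the integral and the $1/p$-th root outside (otherwise the constant $2(p')^{1/p}$ in the statement could not arise). Minkowski's inequality then splits $\|\widetilde M f\|_{L^p(B_0)}$ into two pieces; after normalising by $\mu B(x_0, N_0 \overline a r)^{\lambda/p}$ and taking the supremum over $B_0$, the estimates for $f_1$ and $f_2$ will contribute the summands $2(p')^{1/p}$ and $1$ of the target constant.

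For the local piece, Lemma \ref{lemma1} applied on the whole space $X$ yields
\[
\|\widetilde M f_1\|_{L^p(B_0)} \leqslant 2(p')^{1/p} \|f_1\|_{L^p(X)} = 2(p')^{1/p}\left(\int_{B(x_0, \overline a r)} |f|^p \dif \mu\right)^{1/p},
\]
and the right-hand side is bounded by $2(p')^{1/p} \mu B(x_0, N_0 \overline a r)^{\lambda/p} \|f\|_{L^{p,\lambda}(X,\mu)_{N_0}}$ directly from the definition of the modified Morrey norm.

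For the tail piece, I would aim for the pointwise estimate $\widetilde M f_2(y) \leqslant \mu B_0^{-(1-\lambda)/p} \|f\|_{L^{p,\lambda}(X,\mu)_{N_0}}$ valid on $B_0$. For any radius $s$ such that $B(y,s)$ meets the complement of $B(x_0,\overline a r)$, the quasi-triangle inequality applied to a witness point $z$ gives $\overline a r \leqslant d(x_0,z) \leqslant C_t(r+s)$, which, combined with the explicit value $\overline a = C_t(C_t(C_s+1)+1)$, forces $s \geqslant r\,C_t(C_s+1)$. A complementary application of quasi-triangle and quasi-symmetry shows $B_0 \subset B(y, C_t(C_s+1)r) \subset B(y,s)$, and in particular $\mu B(y, N_0 s) \geqslant \mu B_0$. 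H\"older's inequality combined with the Morrey norm of $f$ on $B(y,s)$ with parameter $N_0$ then gives
\[
\frac{1}{\mu B(y, N_0 s)} \int_{B(y,s)} |f_2|\,\dif \mu \leqslant \mu B(y, N_0 s)^{(\lambda-1)/p} \|f\|_{L^{p,\lambda}(X,\mu)_{N_0}},
\]
and the lower bound on $\mu B(y, N_0 s)$ together with $\lambda<1$ upgrades this to the claimed pointwise estimate. Integrating over $B_0$, dividing by $\mu B(x_0, N_0 \overline a r)^\lambda$ and using $\mu B_0 \leqslant \mu B(x_0, N_0 \overline a r)$ furnishes the summand $\|f\|_{L^{p,\lambda}(X,\mu)_{N_0}}$.

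The delicate step is the calibration of $\overline a$: its precise value is chosen so that the forced lower bound on $s$ (from $B(y,s)$ touching $B(x_0, \overline a r)^c$) coincides exactly with the radius $C_t(C_s+1)r$ required to guarantee $B_0 \subset B(y,s)$. Any smaller enlargement would decouple these two inclusions, and since no doubling is available in the nonhomogeneous setting there would be no way to compare $\mu B(y, N_0 s)$ with $\mu B_0$ from below, breaking the tail estimate.
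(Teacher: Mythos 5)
Your proof is correct and follows essentially the same route as the paper's: the same splitting $f=f_1+f_2$ at radius $\overline a r$, Lemma \ref{lemma1} for the local part, and the pointwise control of $\widetilde M f_2$ on $B(x_0,r)$ via the inclusion $B(x_0,r)\subset B(y,C_t(C_s+1)r)\subset B(y,s)$ forced by the value of $\overline a$. The only difference is that you spell out the geometric calibration of $\overline a$ (and the corrected $L^p$-form of the modified Morrey norm) that the paper delegates to a citation of \cite{KMCVEE}.
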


\begin{proof}Let $r$ be a small positive number and represent $f$ as follows $f=f_1+f_2$, where $f_1=f\cdot\chi_{B(x,\overline a r)}$, $f_2=f-f_1$ and $\overline a$ is the constant defined above.

We have

\begin{multline*}
\left[\frac{1}{\mu B(x,N_0\overline ar)^\lambda} \int_{B(x,r)} (\widetilde M f)^p(y) \dif \mu(y)\right]^\frac{1}{p} \leqslant \\ \left[\frac{1}{\mu B(x,N_0\overline ar)^\lambda} \int_{B(x,r)} (\widetilde M f_1)^p(y) \dif \mu(y)\right]^\frac{1}{p} + \\
\left[\frac{1}{\mu B(x,N_0\overline ar)^\lambda} \int_{B(x,r)} (\widetilde M f_2)^p(y) \dif \mu(y)\right]^\frac{1}{p}=:\\J_1(x,r)+J_2(x,r).
\end{multline*}
By applying Lemma \ref{lemma1}, we have that
\[
\begin{split}
J_1(x,r) &\leqslant \frac{1}{\mu B(x,N_0\overline ar)^\frac{\lambda}{p}} \left(\int_{B(x,r)} (\widetilde M f_1)^p(y) \dif \mu(y)\right)^\frac{1}{p} \\
&\leqslant 2(p^\prime)^\frac{1}{p}[\mu B(x,N_0\overline ar)]^{-\frac{\lambda}{p}} \left(\int_{B(x,\overline a r)} ( f)^p(y) \dif \mu(y)\right)^\frac{1}{p}\\
&\leqslant 2(p^\prime)^\frac{1}{p} \|f\|_{L^{p,\lambda}(X,\mu)_{N_0}}.
\end{split}
\]
Observe now that (see also \cite[p. 929]{KMCVEE}) if $y \in B(x,r)$, then $B(x,r)\subset B(y,C_t(C_s+1)r)\subset B(x,\overline a r)$. Hence, for $y \in B(x,r)$
\[
(\widetilde M f_2)(y)\leqslant \sup_{B\supset B(x,r)} \frac{1}{\mu(N_0B)}\int_B |f(z)|\dif \mu(z).
\]
Consequently
\[
\begin{split}
J_2(x,r)&\leqslant \left[ \mu B(x,N_0\overline ar) \right]^{-\frac{\lambda}{p}}\sup_{B\supset B(x,r)}\left[\frac{1}{\mu (N_0B)} \int_B |f(y)|\dif \mu(y) \right] \left(\mu B(x,r) \right)^\frac{1}{p}\\
&\leqslant \left(\mu B(x,N_0r) \right)^\frac{1-\lambda}{p}\sup_{B\supset B(x,r)}\left[\frac{1}{\mu (N_0B)} \int_B |f(y)|^p \dif \mu(y) \right]^\frac{1}{p}\\
&\leqslant \sup_B \left[\frac{1}{\mu (N_0B)^\frac{\lambda}{p}} \int_B |f(y)|^p \dif \mu(y) \right]^\frac{1}{p}\\
&=\|f\|_{L^{p,\lambda}(X,\mu)_{N_0}}
\end{split}
\]
Therefore we obtain \eqref{equacaolemma2}.
\end{proof}

For the next statement we refer to \cite{kok_90} for Euclidean spaces and \cite[p. 367]{EdKoMe} for nonhomogeneous spaces.

\begin{theorema}
Let $1<p<\infty$, $0<\alpha<1/p$, $q= p/(1-\alpha p)$. Then $K_\alpha$ is bounded from $L^p(X,\mu)$ to $L^q(X,\mu)$ if and only if there is a positive constant $b$ such that
\begin{equation}\label{equacao3}
\mu B(x,r)\leqslant br
\end{equation}
for all $x \in X$ and $r>0$, i.e., the measure is upper 1-Ahlfors regular.
\end{theorema}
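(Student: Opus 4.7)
The proof splits into the two implications. The sufficiency ($\mu B(x,r)\leqslant br$ implies $L^{p}\to L^{q}$ boundedness) will rest on a Hedberg-type pointwise estimate of $K_{\alpha}f(x)$ in terms of the modified maximal function $\widetilde{M}f(x)$ and $\|f\|_{L^{p}}$, after which Lemma \ref{lemma1} closes the argument. The necessity (boundedness implies growth) will be obtained by testing the inequality on characteristic functions of balls.

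\textbf{Sufficiency.} Fix $x\in X$ and a parameter $r>0$ to be optimized, and split
\[
K_{\alpha}f(x)=\int_{d(x,y)<r}\frac{f(y)}{d(x,y)^{1-\alpha}}\,\dif\mu(y)+\int_{d(x,y)\geqslant r}\frac{f(y)}{d(x,y)^{1-\alpha}}\,\dif\mu(y)=:I_{1}(x)+I_{2}(x).
\]
For $I_{1}$ I would decompose dyadically into the annuli $\{y:2^{-k-1}r\leqslant d(x,y)<2^{-k}r\}$, bound $d(x,y)^{\alpha-1}\leqslant(2^{-k-1}r)^{\alpha-1}$ on each annulus, and estimate $\int_{B(x,2^{-k}r)}|f|\,\dif\mu\leqslant\mu B(x,N_{0}2^{-k}r)\,\widetilde{M}f(x)\leqslant bN_{0}2^{-k}r\,\widetilde{M}f(x)$; summing the resulting geometric series (which converges since $\alpha>0$) yields $I_{1}(x)\lesssim r^{\alpha}\widetilde{M}f(x)$. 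For $I_{2}$ I would apply H\"older's inequality with exponents $p$ and $p'$ and estimate the remaining integral $\int_{d(x,y)\geqslant r}d(x,y)^{-(1-\alpha)p'}\,\dif\mu(y)$ again by dyadic layers $2^{k}r\leqslant d(x,y)<2^{k+1}r$ and the growth condition; convergence requires $(1-\alpha)p'>1$, which is exactly the hypothesis $\alpha<1/p$, and produces $I_{2}(x)\lesssim r^{\alpha-1/p}\|f\|_{L^{p}}$. Balancing the two terms by choosing $r$ so that $r^{\alpha}\widetilde{M}f(x)=r^{\alpha-1/p}\|f\|_{L^{p}}$ yields the Hedberg-type pointwise inequality
\[
K_{\alpha}f(x)\leqslant C\,\|f\|_{L^{p}}^{\alpha p}\,(\widetilde{M}f(x))^{1-\alpha p}.
\]
Since $(1-\alpha p)q=p$ by the relation $q=p/(1-\alpha p)$, raising to the $q$-th power, integrating, and invoking Lemma \ref{lemma1} produces $\|K_{\alpha}f\|_{L^{q}}\leqslant C\|f\|_{L^{p}}$.

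\textbf{Necessity.} Assume the boundedness holds with constant $A$, fix $x_{0}\in X$ and $r>0$, and test on $f=\chi_{B(x_{0},r)}$, so that $\|f\|_{L^{p}}=\mu B(x_{0},r)^{1/p}$. The quasitriangle inequality yields $d(x,y)\leqslant 2C_{t}r$ whenever $x,y\in B(x_{0},r)$, hence $d(x,y)^{\alpha-1}\geqslant(2C_{t}r)^{\alpha-1}$ on this set, and therefore
\[
K_{\alpha}f(x)\geqslant(2C_{t}r)^{\alpha-1}\mu B(x_{0},r)\qquad\text{for every }x\in B(x_{0},r).
\]
Integrating the $q$-th power over $B(x_{0},r)$ gives $\|K_{\alpha}f\|_{L^{q}}^{q}\gtrsim r^{(\alpha-1)q}\mu B(x_{0},r)^{q+1}$, which combined with the assumed bound $\|K_{\alpha}f\|_{L^{q}}^{q}\leqslant A^{q}\mu B(x_{0},r)^{q/p}$ and the arithmetic identity $q+1-q/p=(1-\alpha)q$ (a direct consequence of $1/p-1/q=\alpha$) collapses to $\mu B(x_{0},r)\leqslant br$ with $b$ depending only on $A,\alpha,p,q$ and the quasimetric constants.

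\textbf{Main obstacle.} The delicate point is keeping the interplay between the quasimetric constants and the inflation factor $N_{0}$ in $\widetilde{M}$ clean: the dyadic estimate for $I_{1}$ only closes if the growth condition controls $\mu B(x,N_{0}2^{-k}r)$, which is precisely the quantity appearing in the denominator of $\widetilde{M}f$ and explains why $\widetilde{M}$ (rather than the ordinary maximal operator) is the correct object in the nondoubling setting. Once this matching is in place, both directions reduce to bookkeeping.
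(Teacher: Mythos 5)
The paper does not actually prove Theorem A: it is quoted from the references \cite{kok_90} and \cite{EdKoMe}, so there is no in-text proof to compare yours against. That said, your argument is correct and complete, and it is worth noting how it relates to what the paper does prove. Your sufficiency half is exactly the Hedberg-type scheme that the authors carry out in detail for the Morrey-space analogue (Lemma~\ref{lemma3}): there they replace your dyadic annuli by the identity $d(x,y)^{\alpha-1}\leqslant 2^{2-\alpha}\int_{d(x,y)}^{2d(x,y)}t^{\alpha-2}\,\dif t$ followed by an interchange of the order of integration, which is just the continuous form of the same layer decomposition; they likewise use the growth condition $\mu B(x,t)\leqslant bt$ to control the near part by $\widetilde M f(x)$, H\"older plus the growth condition for the far part, optimize in the cut-off radius, and close with the $L^p$-boundedness of $\widetilde M$ (Lemma~\ref{lemma1}). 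Your exponent bookkeeping is right: $(1-\alpha p)q=p$ turns the pointwise bound into $\|K_\alpha f\|_{L^q}\lesssim\|f\|_{L^p}$, and the convergence condition $(1-\alpha)p'>1$ for the far part is precisely $\alpha<1/p$. The necessity half by testing on $\chi_{B(x_0,r)}$ is the standard argument, and the arithmetic $q+1-q/p=(1-\alpha)q$ checks out. Two cosmetic points: for a non-symmetric quasimetric the bound on $d(x,y)$ for $x,y\in B(x_0,r)$ is $C_t(1+C_s)r$ rather than $2C_tr$ (harmless, since $b$ may depend on the quasimetric constants), and in the Hedberg step you should dispose of the degenerate cases $\widetilde Mf(x)=0$ and $\widetilde Mf(x)=\infty$ before optimizing in $r$ (both are trivial).
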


\begin{remark}
 Theorem A is proved for $\mu X =\infty$ but it is also true for $\mu X<\infty$ (observe that $d_X<\infty$ implies $\mu X <\infty$ because $\mu B <\infty$ for all balls!)
\end{remark}

\begin{lemma}\label{lemma3}
Let the measure $\mu$ be upper 1-Ahlfors regular, $1<p<\infty$, $0<\alpha<\frac{1-\lambda}{p}$, $0\leqslant \lambda<1$.  We set $q=\frac{p(1-\lambda)}{1-\lambda-\alpha p}$. Then the following inequality holds:
\[
\|K_\alpha f\|_{L^{q,\lambda}(X,\mu)_{N_0\overline a}} \leqslant C_{b,N_0,p,\lambda,\alpha} \| f\|_{L^{p,\lambda}(X,\mu)_{N_0}},
\]
where
\[
C_{b,N_0,p,\lambda,\alpha}=4\left[ 1+2(p^\prime)^\frac{1}{p}\right]^\frac{p}{q}\left[ \frac{bN_0}{\alpha}+\frac{b^{\frac{1}{p_1}-\frac{\lambda}{p}}N_0^\frac{\lambda}{p}p}{1-\lambda-\alpha p} \right].
\]
with $b$ from \eqref{equacao3}.
\end{lemma}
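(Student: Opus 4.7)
The plan is to adapt Hedberg's pointwise interpolation inequality to the present nonhomogeneous setting and then pass from the modified maximal function to the modified Morrey norm via Lemma \ref{lemma2}. Fix $x\in X$ and a parameter $r>0$, and split
\[
K_\alpha f(x)=\int_{B(x,r)}\frac{|f(y)|}{d(x,y)^{1-\alpha}}\dif\mu(y)+\int_{X\setminus B(x,r)}\frac{|f(y)|}{d(x,y)^{1-\alpha}}\dif\mu(y)=:A(x,r)+B(x,r).
\]

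Decomposing $A(x,r)$ into the dyadic annuli $\{y\colon r2^{-k-1}\leqslant d(x,y)<r2^{-k}\}$, estimating each integral by $(r2^{-k-1})^{\alpha-1}\,\mu B(x,N_0 r 2^{-k})\,\widetilde{M}f(x)$ and using the upper $1$-Ahlfors bound \eqref{equacao3}, one obtains a convergent geometric series whose prefactor behaves like $bN_0/\alpha$, so that
\[
A(x,r)\leqslant \frac{2^{1-\alpha}\,bN_0}{1-2^{-\alpha}}\, r^\alpha \widetilde{M}f(x).
\]
For the far part $B(x,r)$, I would split into the annuli $\{y\colon r2^k\leqslant d(x,y)<r2^{k+1}\}$, $k\geqslant 0$, apply H\"older's inequality with exponents $(p,p')$, and bound the $L^p$-integral over $B(x,r2^{k+1})$ by $(\mu B(x,N_0 r 2^{k+1}))^{\lambda/p}\|f\|_{L^{p,\lambda}(X,\mu)_{N_0}}$ via the definition of the modified Morrey norm. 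Two applications of \eqref{equacao3} convert the result into a geometric series with ratio $2^{\alpha-(1-\lambda)/p}<1$ (as $\alpha<(1-\lambda)/p$), whose sum is comparable to $p/(1-\lambda-\alpha p)$. This yields
\[
B(x,r)\leqslant C_2\, r^{\alpha-(1-\lambda)/p}\|f\|_{L^{p,\lambda}(X,\mu)_{N_0}},
\]
with $C_2$ of order $b^{1/p'+\lambda/p}N_0^{\lambda/p}\, p/(1-\lambda-\alpha p)$.

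Balancing the two bounds by the choice $r=\bigl(\|f\|_{L^{p,\lambda}(X,\mu)_{N_0}}/\widetilde{M}f(x)\bigr)^{p/(1-\lambda)}$ produces the Hedberg-type pointwise inequality
\[
K_\alpha f(x)\leqslant C_3\,\bigl(\widetilde{M}f(x)\bigr)^{p/q}\|f\|_{L^{p,\lambda}(X,\mu)_{N_0}}^{1-p/q},
\]
where $p/q=(1-\lambda-\alpha p)/(1-\lambda)$ and $C_3$ is, up to a small combinatorial factor, the sum of the two prefactors from the previous step, hence of precisely the form appearing inside the square brackets in the statement. Raising this inequality to the $q$-th power, integrating over $B(x,r)$, dividing by $\mu B(x,N_0\overline{a} r)^\lambda$, taking $\sup_{x,r}$ and invoking Lemma \ref{lemma2} applied to $\widetilde{M}f$ contributes exactly the factor $[1+2(p')^{1/p}]^{p/q}$ and yields the stated inequality. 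The main obstacle is careful bookkeeping of constants: one must verify that the $1/\alpha$ factor emerges from the local geometric series (via $1-2^{-\alpha}\gtrsim\alpha$) and the $p/(1-\lambda-\alpha p)$ factor from the far one (via $1-2^{\alpha-(1-\lambda)/p}\gtrsim(1-\lambda-\alpha p)/p$), and that the remaining numerical constants can be absorbed into the $4$ displayed in the statement.
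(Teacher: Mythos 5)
Your proposal is correct and follows essentially the same route as the paper: a Hedberg-type pointwise estimate of $K_\alpha f$ against $\widetilde{M}f$ and the modified Morrey norm, balanced by the choice $r=\bigl(\|f\|_{L^{p,\lambda}(X,\mu)_{N_0}}/\widetilde{M}f(x)\bigr)^{p/(1-\lambda)}$, followed by an application of Lemma \ref{lemma2}; the only cosmetic difference is that you sum over dyadic annuli where the paper integrates $d(x,y)^{\alpha-1}\leqslant 2^{2-\alpha}\int_{d(x,y)}^{2d(x,y)}t^{\alpha-2}\dif t$ in the continuous variable $t$. Your constants then differ from the displayed ones by harmless absolute factors (e.g.\ $1-2^{-\alpha}\sim\alpha\log 2$ rather than exactly $\alpha$), but they have the same structural dependence on $\alpha$, $p$ and $\lambda$, which is all that the later grand-Morrey application requires.
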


\begin{proof}
First we prove the Hedberg's type inequality
\[
|K_\alpha f(x)|\leqslant A_{b,N_0,p,\lambda,\alpha} \widetilde Mf(x)^{1-\frac{p\alpha}{1-\lambda}}\|f\|^{\frac{\alpha p}{1-\lambda}}_{L^{p,\lambda}(X,\mu)_{N_0}},
\]
where
\[
A_{b,N_0,p,\lambda,\alpha}=4\left[ \frac{bN_0}{\alpha}+\frac{b^{\frac{1}{p_1}-\frac{\lambda}{p}}N_0^\frac{\lambda}{p}p}{1-\lambda-\alpha p} \right].
\]

Observe that the inequality $d(x,y)^{\alpha-1}\leqslant 2^{2-\alpha}\int_{d(x,y)}^{2d(x,y)} t^{\alpha-2}\dif t$ holds, where $0<d(x,y)<d_X$. Hence,
\[
\begin{split}
|K_\alpha f(x)|&\leqslant 4 \int_X |f(y)| \left ( \int_{d(x,y)}^{2d(x,y)} t^{\alpha-2}\dif t \right) \dif \mu(y)\\
&=4 \int_0^{2d_X} t^{\alpha-2} \left(  \int_{t/2 <d(x,y)<t} |f(y)|\dif \mu(y) \right)\dif t =4\left (\int_0^\ve +\int_\ve^{2d_X}  \right) \ldots \\
&=4(S_1+S_2).
\end{split}
\]
By using condition \eqref{equacao3} we find that
\[
\begin{split}
S_2&\leqslant \int_0^\ve t^{\alpha-1}  \left( \frac{1}{t} \int_{B(x,t)} |f(y)|\dif \mu(y) \right) \dif t \\
&\leqslant bN_0 \left( \int_0^\ve t^{\alpha-1} \dif t \right) \widetilde Mf(x)\\
&=\frac{bN_0}{\alpha} \ve^\alpha \widetilde M f(x),
\end{split}
\]
where $b$ is the constant  from \eqref{equacao3}.

Further, H\"older's inequality and condition \eqref{equacao3} yields
 \[
\begin{split}
\frac{1}{t} \int_{B(x,t)} |f(y)|\dif \mu(y)&\leqslant \frac{(\mu B(x,t))^\frac{1}{p^\prime}}{t}\left(\int_{B(x,t)} |f(y)|^p \dif \mu(y) \right)^\frac{1}{p}\\
&\leqslant \frac{(\mu B(x,t))^\frac{1}{p^\prime} (\mu B(x,N_0t))^\frac{\lambda}{p}}{t} \times \\ 
& \hspace{2.5cm} \times \left(\frac{1}{\mu B(x,N_0t))^\lambda}\int\limits_{B(x,t)} |f(y)|^p \dif \mu(y) \right)^\frac{1}{p}\\
&\leqslant b^{\frac{1}{p^\prime}+\frac{\lambda}{p}}N_0^\frac{\lambda}{p} t^\frac{\lambda-1}{p}\|f\|_{L^{p,\lambda}(X,\mu)_{N_0}}.
\end{split}
\]
Hence
\[
\begin{split}
(K_\alpha f)(x)&\leqslant 4\left[ \frac{bN_0}{\alpha} \ve^\alpha (\widetilde M f)(x) +  b^{\frac{1}{p^\prime}+\frac{\lambda}{p}}N_0^\frac{\lambda}{p} \left(\int_\ve^{2\ve} t^{\frac{\lambda-1}{p}+\alpha-1} \dif t\right) \|f\|_{L^{p,\lambda}(X,\mu)_{N_0}} \right]\\
&= 4\left[ \frac{bN_0}{\alpha} \ve^\alpha (\widetilde M f)(x) +  b^{\frac{1}{p^\prime}+\frac{\lambda}{p}}N_0^\frac{\lambda}{p}  \frac{\ve^{\frac{\lambda-1}{p}+\alpha}}{\frac{1-\lambda}{p}-\alpha} \|f\|_{L^{p,\lambda}(X,\mu)_{N_0}} \right]\\
&=\left[ \frac{bN_0}{\alpha} \ve^\alpha (\widetilde M f)(x) +  \frac{b^{\frac{1}{p^\prime}-\frac{\lambda}{p}}N_0^\frac{\lambda}{p}p}{1-\lambda-\alpha p}  \ve^{\frac{\lambda-1}{p}+\alpha} \|f\|_{L^{p,\lambda}(X,\mu)_{N_0}} \right].
\end{split}
\]
Let us take 
\[
\ve= \left[\frac{\|f\|_{L^{p,\lambda}(X,\mu)_{N_0}}}{(\widetilde M f)(x)} \right]^\frac{p}{1-\lambda}
\]
Then
\[
(K_\alpha f(x))\leqslant \underbrace{4 \left[ \frac{bN_0}{\alpha} +  \frac{b^{\frac{1}{p^\prime}-\frac{\lambda}{p}}N_0^\frac{\lambda}{p}p}{1-\lambda-\alpha p}  \right]}_{=:A_{b,N_0,p,\lambda,\alpha}}  \|f\|_{L^{p,\lambda}(X,\mu)_{N_0}}^\frac{\alpha-p}{1-\lambda} (\widetilde M f)^{1-\frac{p\alpha}{1-\lambda}}(x) .
\]

Finally, letting
\[
J:=\left(\frac{1}{\mu B(x,N_0\overline ar)^\lambda} \int_{B(x,r)} |K_\alpha f(y)|^q \dif \mu(y) \right)^\frac{1}{q}
\]
by Lemma \ref{lemma2} we have
\[
\begin{split}
J&\leqslant A_{b,N_0,p,\lambda,\alpha} \|f\|_{L^{p,\lambda}(X,\mu)_{N_0}}^\frac{\alpha-p}{1-\lambda} 
 \left[ \frac{1}{\mu B(x,N_0\overline ar)^\lambda} \int_{B(x,r)} (\widetilde M f(y))^q d\mu(y) \right]^\frac{1}{q}\\
&\leqslant A_{b,N_0,p,\lambda,\alpha} \|f\|_{L^{p,\lambda}(X,\mu)_{N_0}}^\frac{\alpha-p}{1-\lambda} \|\widetilde Mf\|_{L^{p,\lambda}(X,\mu)_{N_0\overline a}}^\frac{p}{q}\\
& \leqslant \left[1+2(p^\prime)^\frac{1}{p} \right]^\frac{p}{2}A_{b,N_0,p,\lambda,\sigma}  \|f\|_{L^{p,\lambda}(X,\mu)_{N_0}}. \qedhere
\end{split}
\]
\end{proof}

Let us study the boundedness of the operator $K_\alpha$ from the space 
$L^{p),\lambda)}_{\theta_1,A_1}(X, \mu)_{N_0}$ to $L^{q),\lambda)}_{\theta_2,A_2}(X, \mu)_{N_0\overline a}$, where $p,q$ and $\lambda$ satisfy the conditions of Lemma \ref{lemma3} and the space $L^{p),\lambda)}_{\theta,A}(X,\mu)_a$ is defined by the norm
\[
\|f\|_{L^{p),\lambda)}_{\theta,A}(X,\mu)_a}=\sup_{0<\ve \leqslant p-1} \sup_{ \substack{ x\in X \\ 0<r<d_X}} \left[\frac{\ve^\theta}{\mu B(x,ar)^{\lambda-A(\ve)}} \int_{B(x,r)} |f(y)|^{p-\ve} d\mu(y) \right]^\frac{1}{p-\ve}.
\]

\begin{theorem}\label{theo:boundedness_of_riesz_nonhomogeneous}
Let $1<p<\infty,$ $0<\alpha <(1-\lambda)/p,$ $0<\lambda <1,$  $1/p-1/q=\alpha/(1-\lambda).$ Suppose that $\theta _{1}>0$ and that $\theta _{2}\geqslant\theta
_{1}(1+\alpha q/(1-\lambda )).$ Let $A_{1}$ and $A_{2}$ be continuous
non-negative functions on $(0,p-1]$ and $(0,q-1]$ respectively satisfying
the conditions:
\begin{itemize}
\item[(i)] $A_{2}\in C^{1}((0,\delta ])$ for some positive $\delta >0;$
\item[(ii)] $\lim_{x\rightarrow 0+}A_{2}(x)=0;$
\item[(iii)] $0\leqslant B:=\lim_{x\rightarrow 0+} \frac{d A_{2}}{dx}(x)<\frac{%
(1-\lambda )^{2}}{\alpha q^{2}};$ 
\item[(iv)] $A_{1}(\eta )=A_{2}(\bar{\phi}^{-1}(\eta )),$ where $\bar{\phi}%
^{-1}$ is the inverse of $\bar{\phi}$ on $(0,\delta ]$ for some $\delta >0.$
\end{itemize}

Then the potential operator $K^{\alpha }$ is $\left( L_{\theta _{1},A_{1}}^{p),\lambda)}(X,\mu)_{N_0} - L_{\theta _{2},A_{2}}^{q),\lambda)}(X,\mu)_{N_0 \overline a }\right)$-bounded.
\end{theorem}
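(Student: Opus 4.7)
The plan is to mimic, step by step, the proof of Theorem \ref{theo:boundedness_of_riesz}, replacing the classical Morrey-space estimate of Proposition \ref{lem:meskhi} with the nonhomogeneous analogue Lemma \ref{lemma3}, and replacing every instance of $\mu B(x,r)^{\lambda-A_j(\varepsilon)}$ by $\mu B(x,N_0 r)^{\lambda-A_2(\varepsilon)}$ (respectively $\mu B(x,N_0\bar a r)^{\lambda-A_2(\varepsilon)}$) to match the definition of $L^{p),\lambda)}_{\theta,A}(X,\mu)_a$. A preliminary observation is that it suffices to treat the borderline case $\theta_2=\theta_1(1+\alpha q/(1-\lambda))$, since $\varepsilon^{\theta_2}\leqslant \varepsilon^{\theta_1(1+\alpha q/(1-\lambda))}$ for small $\varepsilon$; condition (iii), together with L'Hospital's rule, guarantees $\bar\phi(x)\sim x$ near $0$, so $\bar\phi^{-1}$ exists on some $(0,\delta]$, and condition (iv) then forces $A_1$ to inherit continuity and the vanishing condition $\lim_{x\to 0+}A_1(x)=0$ from $A_2$.

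Fix a small $\sigma\in(0,s_{\max})$ and, for $x\in X$, $0<r<d_X$ and $0<\varepsilon<q-1$, consider
\[
J(\varepsilon,x,r):=\varepsilon^{\theta_2/(q-\varepsilon)}\left(\frac{1}{\mu B(x,N_0\bar a r)^{\lambda-A_2(\varepsilon)}}\int_{B(x,r)}|K_\alpha f(y)|^{q-\varepsilon}\dif\mu(y)\right)^{\!1/(q-\varepsilon)}.
\]
As in Theorem \ref{theo:boundedness_of_riesz}, I would first dispose of the range $\sigma<\varepsilon\leqslant s_{\max}$ by H\"older's inequality, pushing everything down to the exponent $q-\sigma$; the $\gamma$-type growth of $\mu$ here is replaced by the estimate $\mu B(x,N_0\bar a r)\leqslant b N_0\bar a r<\infty$ coming from the upper $1$-Ahlfors regularity (relation \eqref{equacao3}), so that $J(\varepsilon,\cdot,\cdot)$ is bounded by a multiple of the full grand norm on the output side, up to a factor $\psi(\sigma)^{-1/(q-\sigma)}$ in the spirit of Lemma \ref{lem:dominance1}. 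The remaining work concentrates on the range $0<\varepsilon\leqslant\sigma$.

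For $\varepsilon\in(0,\sigma]$, define $\eta=\bar\phi(\varepsilon)$, i.e.\ solve
\[
\frac{1}{p-\eta}-\frac{1}{q-\varepsilon}=\frac{\alpha}{1-\lambda+A_2(\varepsilon)},
\]
and apply Lemma \ref{lemma3} with parameters $p-\eta$, $q-\varepsilon$, $\lambda-A_2(\varepsilon)$ and $\alpha$ in place of $p$, $q$, $\lambda$, $\alpha$. This yields
\[
\left(\frac{1}{\mu B(x,N_0\bar a r)^{\lambda-A_2(\varepsilon)}}\int_{B(x,r)}|K_\alpha f(y)|^{q-\varepsilon}\dif\mu(y)\right)^{\!1/(q-\varepsilon)}\leqslant C_\varepsilon\,\|f\|_{L^{p-\eta,\lambda-A_2(\varepsilon)}(X,\mu)_{N_0}},
\]
with $C_\varepsilon$ of the form $C_{b,N_0,p-\eta,\lambda-A_2(\varepsilon),\alpha}$. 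Multiplying by $\varepsilon^{\theta_2/(q-\varepsilon)}=\psi(\varepsilon)^{1/(q-\varepsilon)}$ and using assumption (iv), namely $A_1(\eta)=A_2(\bar\phi^{-1}(\eta))=A_2(\varepsilon)$, the right-hand side becomes exactly $\eta^{\theta_1/(p-\eta)}$ times the $(p-\eta,\lambda-A_1(\eta))$-Morrey norm of $f$ with the correct scaling constant $N_0$, i.e.\ it is dominated by $\|f\|_{L^{p),\lambda)}_{\theta_1,A_1}(X,\mu)_{N_0}}$.

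The only point that needs care is the uniform boundedness of the prefactor $\psi(\varepsilon)^{1/(q-\varepsilon)}\eta^{-\theta_1/(p-\eta)}\,C_\varepsilon$ as $\varepsilon\to 0+$, and this is the main obstacle. Here one uses the asymptotic $\bar\phi(\varepsilon)\sim\varepsilon$ (so $\eta^{-\theta_1/(p-\eta)}\sim\varepsilon^{-\theta_1/p}$), together with the choice $\theta_2=\theta_1(1+\alpha q/(1-\lambda))$, to see that the power of $\varepsilon$ coming from $\psi(\varepsilon)^{1/(q-\varepsilon)}\eta^{-\theta_1/(p-\eta)}$ stays nonnegative; and the explicit form
\[
C_{b,N_0,p-\eta,\lambda-A_2(\varepsilon),\alpha}=4\!\left[1+2((p-\eta)')^{1/(p-\eta)}\right]^{(p-\eta)/(q-\varepsilon)}\!\left[\frac{bN_0}{\alpha}+\frac{b^{1/(p-\eta)'-(\lambda-A_2(\varepsilon))/(p-\eta)}N_0^{(\lambda-A_2(\varepsilon))/(p-\eta)}(p-\eta)}{1-\lambda+A_2(\varepsilon)-\alpha(p-\eta)}\right]
\]
from Lemma \ref{lemma3} remains finite because $\eta\to 0$, $A_2(\varepsilon)\to 0$, and the relation defining $\eta$ keeps $(1-\lambda+A_2(\varepsilon))-\alpha(p-\eta)$ bounded below away from $0$ on $(0,\sigma]$ (for $\sigma$ chosen sufficiently small, using the strict inequality $\alpha p<1-\lambda$). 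Taking the supremum in $x,r,\varepsilon$ then yields the desired bound and completes the proof.
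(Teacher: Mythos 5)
Your proposal is correct and follows essentially the same route as the paper: the paper's own proof consists of the single remark that one argues exactly as in Theorem \ref{theo:boundedness_of_riesz}, substituting Lemma \ref{lemma3} for Proposition \ref{lem:meskhi} (with $\gamma=1$) and carrying the dilation constants $N_0$, $N_0\overline a$ through the modified Morrey norms, which is precisely what you do. Your extra care with the uniform boundedness of the prefactor $\psi(\varepsilon)^{1/(q-\varepsilon)}\eta^{-\theta_1/(p-\eta)}C_\varepsilon$ fills in a detail the paper leaves implicit.
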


\begin{proof}
The proof follows the same lines as Theorem \ref{theo:boundedness_of_riesz}.
\end{proof}

% ------------------------------------------------------------------------
\subsection*{Acknowledgment}

 The first and second authors were  partially supported by the Shota Rustaveli National Science Foundation Grant (Project No. GNSF/ST09\_ 23\_ 3-100). The third author was partially supported by \textsl{Funda\c c\~ao para a Ci\^encia e a Tecnologia} (FCT), \textsf{Grant SFRH/BPD/63085/2009}, Portugal and by Pontificia Universidad Javeriana.%, Research Project .
% ------------------------------------------------------------------------


\begin{thebibliography}{99}

 \bibitem{alm_has_sam_08}
A. Almeida,  J. Hasanov, S. Samko, \textit{Maximal and potential operators
in  variable exponent Morrey spaces}. Georgian Math. J. \textbf{15}(2) (2008),  195--208.


\bibitem{capone_fio}
C. Capone, A. Fiorenza, \textit{On small Lebesgue spaces}.  J. Function Spaces and
Applications, \textbf{3} (2005), 73--89.

\bibitem{chiafrasca}
F. Chiarenza, M.  Frasca,  \textit{Morrey spaces and Hardy-Littlewood maximal function}. Rend. Mat. Appl. \textbf{7}  (1987), 273--279.

\bibitem{coifmanfefferman}
R. R. Coifman, C.  Fefferman,  \textit{Weighted norm inequalities for maximal functions and singular functions}. Studia Math. \textbf{51} (1975), 241--250.

\bibitem{coifmanweiss}
R.R.~Coifman, G. Weiss, \textit{Analyse harmonique non-commutative sur certains espaces homog\'enes}, Lecture Notes in Math., vol. 242, Springer-Verlag, BErlin, 1971.

\bibitem{EdKoMe}
D.E.~Edmunds, V.~ Kokilashvili, A.~Meskhi. {\it Bounded and compact integral operators.}
Mathematics and its Applications, 543. Kluwer Academic Publishers, Dordrecht,  2002. xvi+643 pp. ISBN: 1-4020-0619-5 

\bibitem{fratta_fio}
G. Di Fratta, A. Fiorenza,  \textit{A direct approach to the duality of grand and small
Lebesgue spaces}. Nonlinear Analysis: Theory, Methods and Applications, \textbf{70}(7) (2009), 2582--2592.

\bibitem{duoandikoetxea}
J. Duoandikoetxea, \emph{Fourier Analysis}. ``Graduate Studies”, Amer. Math. Soc., 2001.

\bibitem{fio200}
A. Fiorenza,   \textit{Duality and reflexivity in grand Lebesgue spaces}. Collect. Math. 
\textbf{51}(2) (2000), 131--148.

\bibitem{fio_gupt_jain}
A. Fiorenza,  B. Gupta, P. Jain,  \textit{The maximal theorem in weighted
grand Lebesgue spaces}. Studia Math. \textbf{188}(2) (2008), 123--133.

\bibitem{fio_kara}
A. Fiorenza, G. E.  Karadzhov, \textit{Grand and small Lebesgue spaces and their analogs.}
Journal for Analysis and its Applications, \textbf{23}(4) (2004), 657--681.

\bibitem{fio_rako}
A. Fiorenza, J. M. Rakotoson,  \textit{Petits espaces de Lebesgue et leurs applications}. C.R.A.S. t, 	\textbf{333} (2001),  1--4.

\bibitem{follstein}
G.B.~Folland, E.M.~Stein, \textit{Hardy spaces on homogeneous groups}, Princeton University Press and University of Tokyo Press, Princeton, New Jersey, 1982.

\bibitem{187a}
M. Giaquinta,  \textit{Multiple integrals in the calculus of variations and non-linear elliptic systems.}
Princeton Univ. Press, 1983.



\bibitem{greco}
L. Greco, T. Iwaniec, C. Sbordone,  \textit{Inverting the $p$\!-harmonic operator}. Manuscripta Math. \textbf{92} (1997), 249--258.




\bibitem{iwa_sbor1992}
T. Iwaniec,  C. Sbordone,  \textit{On the integrability of the Jacobian under minimal hypotheses}. Arch. Rational Mech. Anal., \textbf{119} (1992), 129--143.



\bibitem{kok_90}     
V. Kokilashvili, Weighted estimates for classical integral operators,
\textit{Nonlinear analysis, function spaces and application, IV},  Teubner-Leipzig (1990), 86--113.

\bibitem{kok_2010}
V. Kokilashvili,   \textit{Weighted problems for operators
of harmonic analysis in some Banach function spaces.}
Lecture course of Summer School and Workshop ``Harmonic Analysis and Related Topics" (HART2010), Lisbon, June 21-25,
\url{http://www.math.ist.utl.pt/~hart2010/kokilashvili.pdf}
, 2010.

\bibitem{317zd}
V. Kokilashvili,   A. Meskhi,  \textit{Boundedness of maximal and singular
operators in {M}orrey  spaces with variable exponent.} Armenian J.
Math. (N.S.) \textbf{1}(1) (2008),  18--28.

\bibitem{kok_mes_08}
V. Kokilashvili,  A.  Meskhi,    \textit{ Maximal  and potential operators in
variable  Morrey  spaces  defined  on nondoubling quasimetric measure spaces.}
Bull.  Georgian  Natl. Acad. Sci. (N.S.) \textbf{2}(3) (2008),  18--21.




\bibitem{KMCVEE}  V. Kokilashvili,A. Meskhi. {\it Maximal functions and potentials in variable exponent Morrey spaces
 with non-doubling measure.}
 Complex Var. Elliptic Equ.  55  (2010),  no. 8-10, 923--936.

\bibitem{kuf}
A. Kufner,  O.   John, S. Fu\v  cik,    \textit{Function spaces.}
Monographs  and  Textbooks  on  Mechanics  of  Solids  and Fluids; Mechanics:
Analysis. Noordhoff International Publishing, Leyden; Academia, Prague, 1977.


\bibitem {357ad}
E. Liflyand,   E. Ostrovsky,  L. Sirota,  \textit{ Structural properties of Bilateral Grand Lebesque Spaces}. 
Turk. J. Math. \textbf{34}  (2010), 207--219.


\bibitem {405a}
C. B. Morrey,   \textit{On the solutions of quasi-linear elliptic partial differential equations}. Trans. Amer. Math. Soc. 	\textbf{43}(1) (1938), 126--166.



\bibitem{meskhi2009}
A. Meskhi,  \textit{Maximal functions and singular integrals in Morrey spaces associated
with grand Lebesgue spaces.} Proc. A. Razmadze Math. Inst. 	\textbf{151} (2009), 139--143.

\bibitem{meskhi}

A. Meskhi, \textit{Maximal functions, potentials and
singular integrals in grand Morrey spaces}, Complex Variables and Elliptic Equations, \textbf{56}(10-11) (2011), 1003--1019.

\bibitem{rafeiro2012}
H. Rafeiro,   A note on boundedness of operators in Grand Grand Morrey spaces, \textit{Advances in Harmonic Analysis and Operator Theory, The Stefan Samko Anniversary Volume, in: Operator Theory: Advances and Applications}, Birkh\" auser, to appear.

\bibitem{rafsamsam}
H. Rafeiro, N. Samko, S. Samko,  \textit{Morrey-Campanato spaces: an overview}, Operator Theory: Advances and Applications (Birkh\" auser), Vol. Operator Theory, Pseudo-Differential Equations, and
Mathematical Physics. (The Vladimir Rabinovich Anniversary Volume), {to appear}

\bibitem{samko_umar}
S.G. Samko, S.M. Umarkhadzhiev, \textit{On
Iwaniec-Sbordone spaces on sets which may have infinite measure}.
Azerb. J. Math. \textbf{1}(1) (2010) 67--84.

\bibitem{strombergtorchinsky}
J.O.~Str\"omberg, A. Torchinsky, \textit{Weighted Hardy spaces}. Lecture Notes in Math., 1381, Springer-Verlag, Berlin, 1989.


\bibitem{ye_xf}

X. Ye, \textit{Boundedness of commutators of singular and potential operators in grand Morrey spaces,}
 Acta Math. Sinica, Chinese Series {\bf 54}(2)(2011), 343--352.




\end{thebibliography}
\end{document}